\newcommand{\inv}{^{-1}}
\newtheorem{theorem}{Theorem}
\newtheorem{definition}[theorem]{Definition}
\newtheorem{proposition}[theorem]{Proposition}
\newtheorem{lemma}[theorem]{lemma}
\newcommand{\wquot}{/\!\!/}
\newcommand{\ra}{\rightarrow}
\newcommand{\act}{\blacktriangleright}
\address{Department of Electrical Engineering,\\
	Indian Institute of Technology Delhi, \\
	New Delhi-110016, INDIA.\\[5pt]
	Department of Electrical Engineering,\\
	Indian Institute of Technology Delhi, \\
	New Delhi-110016, INDIA.\\
}
\keywords{category action, hierarchy of bases, base structured geometry, multi-object species of structures}
\title{Unified Functorial Signal Representation II: Category action, Base Hierarchy, Geometries as Base structured categories}
\author{Salil Samant and Shiv Dutt Joshi}
\begin{document}
	
\maketitle
\begin{abstract}
In this paper we propose and study few applications of the base structured categories $\mathcal{X} \rtimes_{\mathbf{F}} \mathbf{C}$, $\int_{\mathbf{C}} \bar{\mathbf{F}}$, $\mathcal{X} \rtimes_{\mathbb{F}} \mathbf{C}$ and ${\int_{\mathbf{C}} \bar{\mathbb{F}}}$. First we show classic transformation groupoid $X \wquot G$ simply being a base-structured category ${\int_{\mathbf{G}} \bar{{F}}}$. Then using permutation action on a finite set, we introduce the notion of a hierarchy of base structured categories $[(\mathcal{X}_{2a} \rtimes_{\mathbf{F_{2a}}} \mathbf{B}_{2a}) \amalg (\mathcal{X}_{2b} \rtimes_{\mathbf{F_{2b}}} \mathbf{B}_{2b}) \amalg ...] \rtimes_{\mathbf{F_{1}}} \mathbf{B}_1$ that models local and global structures as a special case of composite Grothendieck fibration. Further utilizing the existing notion of transformation double category $(\mathcal{X}_{1} \rtimes_{\mathbf{F_{1}}} \mathbf{B}_{1}) \wquot \mathbf{2G}$, we demonstrate that a hierarchy of bases naturally leads one from 2-groups to n-category theory. Finally we prove that every classic Klein geometry is the Grothendieck completion ($\mathbf{G} = \mathcal{X} \rtimes_{\mathbb{F}} \mathbf{H}$) of ${\mathbb{F}}: \mathbf{H} \xrightarrow{{F}} \mathbf{Man}^{\infty} \xrightarrow{U} \mathbf{Set}$. This is generalized to propose a set-theoretic definition of a groupoid geometry $(\mathcal{G},\mathcal{B})$ (originally conceived by Ehresmann through transport and later by Leyton using transfer) with a principal groupoid $\mathcal{G} = \mathcal{X} \rtimes \mathcal{B}$ and geometry space $\mathcal{X} = \mathcal{G}/\mathcal{B}$; which is essentially same as $\mathbf{G} = \mathcal{X} \rtimes_{\mathbb{F}} \mathbf{B}$ or precisely the completion of ${\mathbb{F}}: \mathbf{B} \xrightarrow{{F}} \mathbf{Man}^{\infty} \xrightarrow{U} \mathbf{Set}$.  

%
%

\end{abstract}

\section{Introduction}
\label{intro}

The notion of base structured categories characterizing a functor were introduced in the preceding part \cite{salilp1} of the sequel. Here the perspective of a functor as an action of a category is considered by using the elementary structure of symmetry on objects especially the permutation action on a finite set. The geometric structure of symmetry is considered to be a fundamental notion in all pure and applied branches of sciences including physics and information science. Historically the algebraic structure of a group has been associated with symmetry \cite{armstrong}. Especially in the representation of signals carrying information structures, groups have played a key role in capturing the symmetry on the signal space. Meanwhile from the mid 20th century a new algebraic structure of a groupoid was found more appropriate for modeling of symmetries on account of its ability to authentically capture both local and global symmetry or in other words internal and external symmetries; see both \cite{weinstein} and \cite{brown87} as well as references therein for systematic account of advantages of groupoids over groups in various broad contexts such as algebraic topology, algebraic geometry, differential topology and geometry, analysis in addition to simple symmetry structure. 

In this paper we study and note the precise difference between a group and groupoid through the perspective of base structured category and fibration of objects using an example of simple set permutation. Symmetry of a hierarchical structure is also studied using single object, multi level bases as well as multi object, multi level bases. The algebraic structure called groupoid naturally and elegantly models phenomena involving the concept of local and global or internal and external symmetries of objects. It is well-known in category theory that the category of pointed and connected groupoids is equivalent to the category of groups implying precisely that it is the non-pointedness and the non-connectedness in general of groupoids over groups which holds the key in all such situations involving some hierarchy of symmetries. We intuitively also relate the imprimitive action of wreath product group with the natural groupoid action on the same simple object and precisely note the key differences due to crucial variability in the pointedness and connectedness of these structures. This motivated us to consider a notion of multi-object species of structures using the concept of multiple objects in the base. 

\subsection{Some Background and Related Work}
Groupoid theory in the context of symmetries is illustrated using numerous examples in \cite{weinstein} and the categorification of local and global symmetries through the central notion of transformation double category is considered in \cite{morton}. The historic perspective of shift from groups to groupoids along with an overview of applications is summarized in \cite{brown87} while groupoids in the context of topological spaces are thoroughly studied in \cite{TGD}. Groupoids in the context of differential geometry were considered much earlier during the past century in \cite{charles} and reviewed in \cite{pradines}. 

In \cite{Leyton01}, Micheal Leyton utilized wreath group action for modeling maximal transfer involved in human psychological perception of auditory structures and visual shapes in psychology. Part of the motivation for this paper and the notion of natural generators as utilized in the sequel comes from the way of modeling generators in \cite{Leyton01}. Using the the work of \cite{charles} we realized that the intuitive notion of transfer in \cite{Leyton01} is same as the rigorous transport of structure formulated by Charles Ehresmann and illustrated in \cite{brown87}.

The notion of species of structures is briefly reviewed in \cite{brown87} while the notion in connection with combinatorics is referred from \cite{joyalspecies}. Looking at the vast field of differential geometry \cite{sharpe} and n-category theory \cite{baeztowards},\cite{leinster} at the moment, the full connections and possible ramifications (if any) of the notion of base hierarchy and base structured geometry on these fields can be perceived only in the future work.

\subsection{Contribution}

In Section~\ref{gc} we briefly revisit the base structured categories $\mathcal{X} \rtimes_{\mathbf{F}} \mathbf{C}$ (denoting abstract left action) $\mathcal{X} \rtimes_{\mathbb{F}} \mathbf{C}$ (denoting concrete left action) as well as their right action counterparts introduced in the preceding part \cite{salilp1} which characterizes a functor as a multi-object action of a category. Motivating from a simple abstract definition of a group action; a similar abstract definition is formulated as in~\ref{def:cat_action1}. Such an abstract form encompasses many equivalent definitions which are based on particular examples of objects such as sets, topological spaces in the corresponding category $\mathbf{D}$ as well as special cases of the $\mathbf{C}$ category such as a groupoid all in a unified way. In section~\ref{sec:basesymm} we prove that a contemporary transformation groupoid is nothing but precisely a base structured category and study few connections with symmetry. 

Next in Sections~\ref{sec:wreath} and~\ref{h} using simple example of permutation action on a finite set at two levels (local and global) we review the essence of Leyton's theory of structured transfer using the notion of hierarchy of base structured categories (where the generative fiber object is essentially transferred by the morphisms of the underlying base category using the perspective of a functor as category action). The total (base structured) category captures the new structure (formed out of the controlled transfer of object another category), being fibered over a base category, capturing the control structure. The special case of wreath group as a (single-object 2-level) base and semi-direct groupoid (as multi-object 2-level) base is also naturally recovered by using the notion of hierarchy of bases in base structured categories. Finally it is shown that hierarchy of base structured categories is just a special case within the theory of composite Grothendieck fibrations.

Further in Section~\ref{2g} we utilize the notion of 2-group action on a category from \cite{morton} to propose a hierarchy of 2-groups capturing the symmetry of hierarchy of base structured categories naturally leading into n-groups or higher category theory. This resembles to us a multi-resolution wavelet analysis common in signal processing or machine learning community.

Finally we propose that every geometry is a base structured category and formulate proof for classic Klein gemeotries. Then as a generalization of Klein geometry we present a simple definition~\ref{geometry} of groupoid geometry from the concept of category action taking motivation from work of Charles Ehresmann \cite{charles} and Jean Pradines \cite{pradines} where the term groupoid geometry was first coined.

\subsection{Acknowledgments}
Many commutative diagrams were produced using Kristoffer Rose's XY-pic package and rest using standard TikZ package.

\section{From group action to category action}
\label{gc}

Since this paper exclusively deals with the characterization of a functor as a category action, we begin with a simple group action and illustrate using the intuitive visual approach of \cite{VGT}, the notion of a category action through visual diagrams.

Recall that there are two different but equivalent definitions of a simple group action. First is the concrete, set-theoretic (object-based) definition of a group action;
\begin{definition} \cite{maclane}
	\label{def:gr_action1}
	Let $X$ be a set and $G$ be a group then a (left) group action is a function $\mu:G \times X \rightarrow X$ such that
	\begin{enumerate}
		\item $\mu(g_2, \mu(g_1,x)) = \mu(g_2g_1,x)$
		\item $\mu(id,x) =x$
	\end{enumerate}
\end{definition}

The second definition is abstract or arrow-based that defines same action as a permutation representation. This is an authentic arrow definition that follows the common arrow philosophy of category theory.

\begin{definition} \cite{maclane}
	\label{def:gr_action2}
	A permutation representation of $G$ on $X$ is a homomorphism from $G$ into $\mathsf{S}(X)$ where $\mathsf{S}(X)$ is the symmetry or automorphism group of $X$.
\end{definition} 

The following Lemma~\ref{lm:ga_prem} establishes the equivalence between these definitions.
\begin{lemma}\cite{maclane}
	\label{lm:ga_prem}
	If $G$ is a group and $X$ is a set then,
	\begin{enumerate}
		\item If $\mu: G \times X \rightarrow X$ is a group action then the map $\phi: G \mapsto \mathsf{S}(X)$ defined as $\phi(g)(x) = \mu (g,x)$ is a permutation representation of $G$ on $\mathsf{S}(X).$
		\item If $\phi:G \mapsto \mathsf{S}(X)$ is a permutation representation then a group action of $G$ on $X$ as is defined as $\mu(g,x):= \phi(g)(x)$.
	\end{enumerate}
\end{lemma}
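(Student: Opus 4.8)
The plan is to establish the two implications separately, in each direction verifying directly the defining conditions of the target notion. For part (1), starting from a group action $\mu$, I would first check that the assignment $\phi(g)(x) := \mu(g,x)$ actually lands in $\mathsf{S}(X)$, i.e. that each $\phi(g)\colon X \to X$ is a bijection. The natural candidate for its inverse is $\phi(g\inv)$: applying axiom~1 of Definition~\ref{def:gr_action1} one gets $\phi(g\inv)\bigl(\phi(g)(x)\bigr) = \mu\bigl(g\inv,\mu(g,x)\bigr) = \mu(g\inv g, x) = \mu(id,x)$, which equals $x$ by axiom~2; the symmetric computation gives $\phi(g)\circ\phi(g\inv) = id_X$. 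Hence $\phi(g)\in\mathsf{S}(X)$ for every $g\in G$.

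Next I would verify that $\phi\colon G \to \mathsf{S}(X)$ is a group homomorphism. Evaluated at an arbitrary $x\in X$, axiom~1 gives $\phi(g_2 g_1)(x) = \mu(g_2 g_1, x) = \mu\bigl(g_2,\mu(g_1,x)\bigr) = \phi(g_2)\bigl(\phi(g_1)(x)\bigr) = \bigl(\phi(g_2)\circ\phi(g_1)\bigr)(x)$, so $\phi(g_2 g_1) = \phi(g_2)\circ\phi(g_1)$; and axiom~2 gives $\phi(id) = id_X$, the unit of $\mathsf{S}(X)$. This is precisely the assertion that $\phi$ is a permutation representation.

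For part (2), given a permutation representation $\phi$, I would set $\mu(g,x) := \phi(g)(x)$ and check the two axioms of Definition~\ref{def:gr_action1}. Axiom~1 follows by unwinding the definition and using multiplicativity of $\phi$: $\mu\bigl(g_2,\mu(g_1,x)\bigr) = \phi(g_2)\bigl(\phi(g_1)(x)\bigr) = \bigl(\phi(g_2)\circ\phi(g_1)\bigr)(x) = \phi(g_2 g_1)(x) = \mu(g_2 g_1, x)$. Axiom~2 follows from $\mu(id,x) = \phi(id)(x) = id_X(x) = x$, since $\phi$ preserves identities.

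This argument is entirely routine, and in fact the two constructions are mutually inverse, so the correspondence $\mu\leftrightarrow\phi$ is a bijection between group actions of $G$ on $X$ and permutation representations of $G$ on $X$. The only step that calls for the slightest care is the very first one — confirming that each $\phi(g)$ is genuinely invertible as a function — since that is exactly what pins the codomain down to $\mathsf{S}(X)$ rather than the monoid of all self-maps of $X$, and it is there that \emph{both} axioms of a group action (not merely the multiplicative one) are needed.
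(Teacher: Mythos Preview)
Your proof is correct and is the standard verification of this equivalence. The paper itself does not supply a proof of this lemma: it simply states that the proof is left to the reader and can be found in \cite{armstrong}, so there is no argument in the paper to compare your approach against.
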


The proof is left to the reader and can be found in \cite{armstrong}. Now observe that the group homomorphism is same as functor $F:\mathbf{G} \rightarrow \mathbf{Set}$ where $\mathbf{G}$ is a category with a single object $\star$, and homset $\mathbf{G}(\star,\star) = G$ with $F(\star) = X$. This immediately motivates one to generalize this to a functor $F:\mathbf{C} \rightarrow \mathbf{D}$. This is precisely what is meant by an action of a category as defined and utilized in this sequel. The analogy immediately suggests an abstract Definition~\ref{def:cat_action1}.

\begin{definition}(Abstract or arrow theoretic)
	\label{def:cat_action1}
	A transformation representation of a category $\mathbf{C}$ on $\mathcal{X}$ is a functor from $\mathbf{C}$ into a category $\mathbf{D}$ where $\mathcal{X} = \amalg_{X \in Ob(\mathbf{C})} F(X)$ is the object of objects (or a coproduct of objects) in $\mathbf{D}$.
\end{definition}

It is assumed that such a coproduct of the codomain category $\mathbf{D}$ exists which implies that such a representation induces an action on the multiple objects of $F(\mathbf{C})$. Such an abstract (left and right) action gives rise to the base structured categories $\mathcal{X} \rtimes_{\mathbf{F}} \mathbf{C}$ and $\int_{\mathbf{C}} \bar{\mathbf{F}}$ as defined in~\cite{salilp1} of this sequel and recalled again here.

\begin{definition}[Abstract Left action induced by a functor]~\cite{salilp1}
	Consider a (covariant) functor $F: \mathbf{C} \rightarrow \mathbf{D}$ between small categories thought of as covariant $\mathbf{F}: \mathbf{C} \rightarrow \mathbf{Cat}$ (with $\mathbf{F} = I \circ {F}$ and $I:\mathbf{D} \rightarrow \mathbf{Cat}$ as defined). Then $\mathcal{X} \rtimes_{\mathbf{F}} \mathbf{C}$ (or $(\int_{\mathbf{C}^{op}} \bar{\mathbf{F}})^{op}$) is a category with 
	
	\begin{itemize}
		\item \textbf{objects}: the pairs $(X,FX)$ where $X \in \mbox{Ob}(\mathbf{C})$ and $FX \in \mbox{Ob}(\mathbf{D})$
		
		\item \textbf{morphisms}: $\mathcal{X} \rtimes_{\mathbf{F}} \mathbf{C}((X,FX),(Y,FY))$ are pairs $(f,\mathrm{id}_{FY})$ where $f:X \rightarrow Y \in \mathbf{C}$, $\mathrm{id}_{FY}:\mathbf{F}f(FX) \rightarrow FY$
		
		\item \textbf{identity}: for $(X,FX)$, the morphism $\mathrm{id}_{(X,FX)} = (\mathrm{id}_{X},\mathrm{id}_{FX})$
		
		\item \textbf{composition}: $(g,\mathrm{id}_{FZ}) \bullet (f,\mathrm{id}_{FY}) = (g \circ f,\mathrm{id}_{FZ} \cdot \mathbf{F}g(\mathrm{id}_{FY})) = (gf,\mathrm{id}_{FZ})$ since $\mathbf{F}g(\mathrm{id}_{FY}) = \mathbf{F}g\mathbf{F}f(FX) \rightarrow \mathbf{F}g(FY)$ 
		
		\item \textbf{unit laws}: for $(f,\mathrm{id}_{FY})$, 
		$(\mathrm{id}_{Y},\mathrm{id}_{FY}) \bullet (f,\mathrm{id}_{FY}) = (f,\mathrm{id}_{FY}) = (f,\mathrm{id}_{FY}) \bullet (\mathrm{id}_{X},\mathrm{id}_{FX})$    
		
		\item \textbf{associativity}:  $(h,\mathrm{id}_{FW})\bullet ((g,\mathrm{id}_{FZ})\bullet (f,\mathrm{id}_{FY}))=((h,\mathrm{id}_{FW})\bullet (g,\mathrm{id}_{FZ}))\bullet (f,\mathrm{id}_{FY}) =(h,\mathrm{id}_{FW})\bullet (g,\mathrm{id}_{FZ})\bullet (f,\mathrm{id}_{FY})$
	\end{itemize}
	
\end{definition} 

\begin{definition}[Abstract Right action induced by a functor]~\cite{salilp1}
	\label{defn3}
	Consider a strict contravariant functor $\bar{F}: \mathbf{C} \rightarrow \mathbf{D}$ between small categories thought of as $\bar{\mathbf{F}}: \mathbf{C} \rightarrow \mathbf{Cat}$ (with $\bar{\mathbf{F}} = I \circ \bar{F}$ and $I:\mathbf{D} \rightarrow \mathbf{Cat}$ as defined). Then $\int_{\mathbf{C}} \bar{\mathbf{F}}$ is a category with

	\begin{itemize}
		\item \textbf{objects}: the pairs $(X,\bar{F}X)$ where $X \in \mbox{Ob}(\mathbf{C})$ and $\bar{F}X \in \mbox{Ob}(\mathbf{D})$
		
		\item \textbf{morphisms}: ${\int_{\mathbf{C}} \bar{\mathbf{F}}}((X,\bar{F}X),(Y,\bar{F}Y))$ are pairs $(f,\mathrm{id}_{\bar{F}X})$ where $f:X \rightarrow Y \in \mathbf{C}$, $\mathrm{id}_{\bar{F}X}:\bar{F}X \rightarrow \bar{\mathbf{F}}f(\bar{F}Y)$
		
		\item \textbf{identity}: for $(X,\bar{F}X)$, the morphism $\mathrm{id}_{(X,\bar{F}X)} = (\mathrm{id}_{X},\mathrm{id}_{\bar{F}X})$
		
		\item \textbf{composition}: $(g,\mathrm{id}_{\bar{F}Y}) \bullet (f,\mathrm{id}_{\bar{F}X}) = (g \circ f,\bar{\mathbf{F}}f(\mathrm{id}_{\bar{F}Y}) \cdot \mathrm{id}_{\bar{F}X}) = (gf,\mathrm{id}_{\bar{F}X})$ since $\bar{\mathbf{F}}f(\mathrm{id}_{\bar{F}Y}): \bar{\mathbf{F}}f\bar{F}Y \rightarrow \bar{\mathbf{F}}f\bar{\mathbf{F}}g(\bar{F}Z)$ 
		
		\item \textbf{unit laws}: for $(f,\mathrm{id}_{\bar{F}X})$, 
		$(\mathrm{id}_{Y},\mathrm{id}_{\bar{F}Y}) \bullet (f,\mathrm{id}_{\bar{F}X}) = (f,\mathrm{id}_{\bar{F}X}) = (f,\mathrm{id}_{\bar{F}X}) \bullet (\mathrm{id}_{X},\mathrm{id}_{\bar{F}X})$    
		
		\item \textbf{associativity}:  $(h,\mathrm{id}_{\bar{F}Z})\bullet ((g,\mathrm{id}_{\bar{F}Y})\bullet (f,\mathrm{id}_{\bar{F}X}))=((h,\mathrm{id}_{\bar{F}Z})\bullet (g,\mathrm{id}_{\bar{F}Y}))\bullet (f,\mathrm{id}_{\bar{F}X}) =(h,\mathrm{id}_{\bar{F}Z})\bullet (g,\mathrm{id}_{\bar{F}Y})\bullet (f,\mathrm{id}_{\bar{F}X})$
	\end{itemize}
\end{definition}

The abstract definition can be made concrete or set-theoretic by considering small category and its action on the objects as structured sets and morphisms as structure preserving functions. Thus concrete (left and right) action gives rise to the base structured categories $\mathcal{X} \rtimes_{\mathbb{F}} \mathbf{C}$ and $\int_{\mathbf{C}} \bar{\mathbb{F}}$ as defined in~\cite{salilp1} of this sequel which we recall here.

\begin{definition}[Concrete Left action induced by a functor]
	\label{defn6}
	Consider a covariant functor ${F}: \mathbf{C} \rightarrow \mathbf{D}$ between small categories with $(\mathbf{D}, U)$ being a concrete category over $\mathbf{Set}$ or a faithful $U:\mathbf{D} \rightarrow \mathbf{Set}$. Then ${\mathbb{F}} = U \circ {F}$ and $\mathcal{X} \rtimes_{\mathbb{F}} \mathbf{C}$ (or $(\int_{\mathbf{C}^{op}} \bar{\mathbb{F}})^{op}$) is a category with

	\begin{itemize}
		\item \textbf{objects}: the pairs $(X,x)$ where $X \in \mbox{Ob}(\mathbf{C})$ and $x \in {\mathbb{F}}X = U({F}X)$
		
		\item \textbf{morphisms}: pairs $(f,y): (X,x) \rightarrow (Y,y)$ where $f:X \rightarrow Y \in \mathbf{C}$, $y = {\mathbb{F}}f(x)$
		
		\item \textbf{identity}: for $(X,x)$, the morphism $\mathrm{id}_{(X,x)} = (\mathrm{id}_{X},x)$
		
		\item \textbf{composition}: $(g,{z}) \bullet (f,{y}) = (g \circ f,z \cdot \mathbb{F}g(y)) = (g \circ f,z)$ since $z = {\mathbb{F}}g{\mathbb{F}}f(x)$ 
		
		\item \textbf{unit laws}: for $(f,{y})$, 
		$(\mathrm{id}_{Y},{y}) \bullet (f,{y}) = (f,{y}) = (f,{y}) \bullet (\mathrm{id}_{X},{x})$    
		
		\item \textbf{associativity}:  $(h,{w})\bullet ((g,{z})\bullet (f,{y}))=((h,{w})\bullet (g,{z}))\bullet (f,{y}) =(h,{w})\bullet (g,{z})\bullet (f,{y})$
	\end{itemize}
\end{definition}

\begin{definition}[Concrete Right action induced by a functor]
	\label{defn5}
	Consider a strict contravariant functor $\bar{F}: \mathbf{C} \rightarrow \mathbf{D}$ between small categories with $(\mathbf{D}, U)$ being a concrete category over $\mathbf{Set}$ or a faithful $U:\mathbf{D} \rightarrow \mathbf{Set}$. Then ${\bar{\mathbb{F}}} = U \circ \bar{F}$ and $\int_{\mathbf{C}} \bar{\mathbb{F}}$ is a category with

	\begin{itemize}
		\item \textbf{objects}: the pairs $(X,x)$ where $X \in \mbox{Ob}(\mathbf{C})$ and $x \in \bar{\mathbb{F}}X = U(\bar{F}X)$
		
		\item \textbf{morphisms}: pairs $(f,x): (X,x) \rightarrow (Y,y)$ where $f:X \rightarrow Y \in \mathbf{C}$, $x = \bar{\mathbb{F}}f(y)$
		
		\item \textbf{identity}: for $(X,x)$, the morphism $\mathrm{id}_{(X,x)} = (\mathrm{id}_{X},x)$
		
		\item \textbf{composition}: $(g,{y}) \bullet (f,{x}) = (g \circ f,{x})$ since $x = \bar{\mathbb{F}}f\bar{\mathbb{F}}g(z)$ 
		
		\item \textbf{unit laws}: for $(f,{x})$, 
		$(\mathrm{id}_{Y},{y}) \bullet (f,{x}) = (f,{x}) = (f,{x}) \bullet (\mathrm{id}_{X},{x})$    
		
		\item \textbf{associativity}:  $(h,{z})\bullet ((g,{y})\bullet (f,{x}))=((h,{z})\bullet (g,{y}))\bullet (f,{x}) =(h,{z})\bullet (g,{y})\bullet (f,{x})$
	\end{itemize}
\end{definition} 

	\begin{figure}[!h]
	\begin{equation*}
	\xymatrix{
		(X,x)   &  & (Y,y)  & & (Z,z) \\
		(X,x) \ar[u]^{(id_X,x)} \ar@{.>}[rr] \ar[rrrru]^{{(g \circ f,z)}} & & (Y,\mathbb{F}f(x)) \ar@{.>}[rr]  & & (Z,\mathbb{F}g\mathbb{F}f(x)) \ar[u]_{(id_Z,z)} \\
		(X,x)   &  & (Y,y)  & & (Z,z) \\
		(X,x) \ar[u]^{(id_X,x)} \ar@{.>}[rr]^{U(Ff)}  \ar@{>}[rru]^{(f,y)} & & (Y,\mathbb{F}f(x)) \ar[u]^{(id_Y,y)} \ar@{.>}[rr]^{U(Fg)} \ar@{>}[rru]^{(g,z)} & & (Z,\mathbb{F}g(y)) \ar[u]_{(id_Z,z)} \\
		X \ar@/^1pc/[rrrr]^{g \circ f} \ar@(dl,ul)[]|{id_X} \ar[rr]_{f} &  & Y \ar@(dr,ur)[]|{id_Y} \ar[rr]_{g} & &  Z \ar@(dr,ur)[]|{id_Z}
	}
	\end{equation*}
	\caption{$\mathcal{X} \rtimes_{\mathbb{F}} \mathbf{C}$ fibred on $\mathbf{C}$; dotted arrows denote concrete functions as left actions}
	\label{fig:leftconcretefib}
\end{figure}
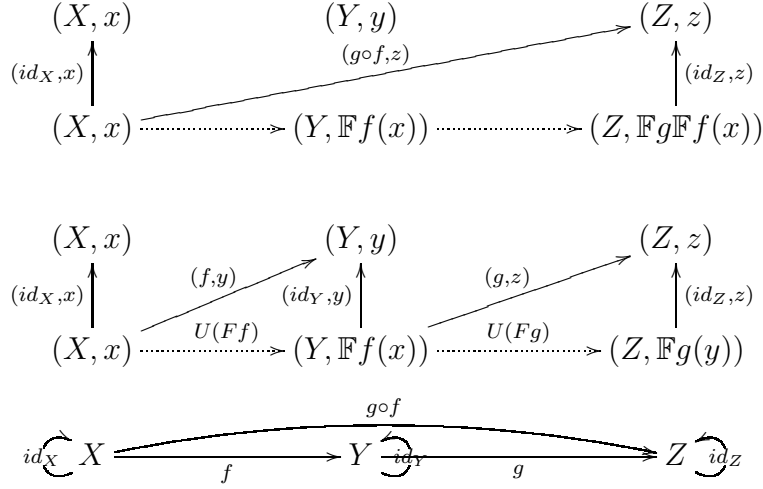 

	\begin{figure}[!h]
	\begin{equation*}
	\xymatrix{
		(X,\bar{\mathbb{F}}f\bar{\mathbb{F}}g(z))  &  & (Y,\bar{\mathbb{F}}g(z)) \ar@{.>}[ll]  & & (Z,z) \ar@{.>}[ll] \\
		(X,x) \ar[u]^{(id_X,{x})} \ar@{>}[rrrru]^{(g \circ f,{x})} & & (Y,y)  & & (Z,z) \ar[u]_{(id_Z,{z})} \\
		(X,\bar{\mathbb{F}}f(y))  &  & (Y,\bar{\mathbb{F}}g(z)) \ar@{.>}[ll]_{U(\bar{F}f)}  & & (Z,z) \ar@{.>}[ll]_{U(\bar{F}g)} \\
		(X,x) \ar[u]^{(id_X,{x})}  \ar@{>}[rru]^{(f,{x})} & & (Y,y) \ar[u]^{(id_Y,{y})} \ar@{>}[rru]^{(g,{y})} & & (Z,z) \ar[u]_{(id_Z,{z})} \\
		X \ar@/^1pc/[rrrr]^{g \circ f} \ar@(dl,ul)[]|{id_X} \ar[rr]_{f} &  & Y \ar@(dr,ur)[]|{id_Y} \ar[rr]_{g} & &  Z \ar@(dr,ur)[]|{id_Z}
	}
	\end{equation*}   
	\caption{$\int_{\mathbf{C}}\bar{\mathbb{F}}$ fibred on $\mathbf{C} \cong \mathbf{C}^{op}$; dotted arrows denote concrete functions as right actions}
	\label{fig:rightconcretefib}
\end{figure}
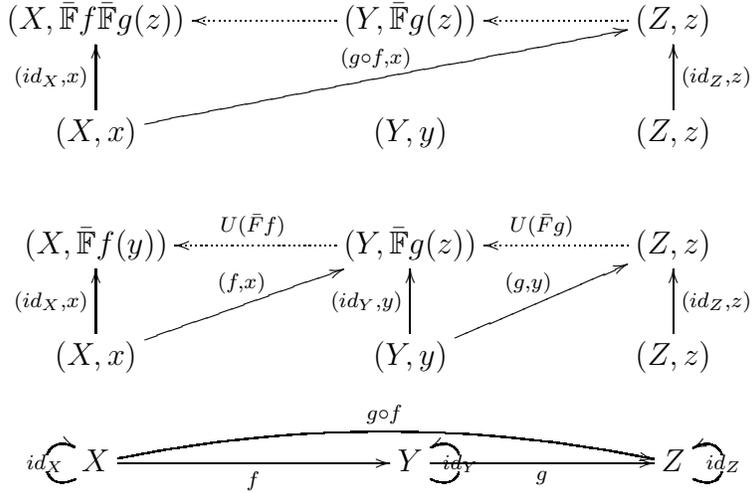

\subsection{Sets as Categories}
In category theory sets are often viewed as categories. More precisely a category is a set if all morphisms are identity morphisms and referred to as \textbf{discrete category}. This is possible using a standard inclusion $\iota : \mathbf{Set} \rightarrow \mathbf{Cat}$. Thus every contravariant functor thought of as a covariant functor (or a presheaf) $F: \mathbf{C}^{op} \rightarrow \mathbf{Set}$ can be completed to a fibred category which is said to be fibred in sets. Thus the base structured categories which arise through construction or completion of functors such as $\bar{\mathbb{F}}: \mathbf{C}^{op} \xrightarrow{\bar{F}} \mathbf{D} \xrightarrow{U} \mathbf{Set}$ are truly nothing but completion of $\bar{\mathbb{F}}':\mathbf{C}^{op} \xrightarrow{\bar{F}} \mathbf{D} \xrightarrow{U} \mathbf{Set} \xrightarrow{\iota} \mathbf{Cat}$ which is $\int_{\mathbf{C}}\iota \bar{\mathbb{F}}$ where $\iota \bar{\mathbb{F}} = \bar{\mathbb{F}}'$. However we will suppress the notation $\iota$ unless explicitly required within a context since Grothendieck completion of $\mathbf{Set}$-valued functors is well studied (via the understood inclusion).

The set-theoretic groupoid based definitions are adapted from \cite{gcp} for the general category action.  

\begin{definition}\cite{gcp}
	\label{def:cat}
	Let $C$ be a set and $C^{(2)}\subseteq C\times C$.  Then $C$ is a small category if there are maps $(g,f)\mapsto gf$ from $C^{(2)}$ into $C$ such that:
	\begin{itemize}
		\item \textbf{associativity}: If $(h,g)$ and $(g,f)$ belong to $C^{(2)}$ then
		$(hg,f)$ and $(h,gf)$, also belong to $C^{(2)}$ and we have $(hg)f = h(gf)$.  
		
		\item \textbf{identity}: the set of elements with $(f,f) = f$ denoted as $C^{(0)}$  
		
		\item \textbf{unit laws}: If $f \in C$ then $(r(f),f),(f,s(f))\in
		C^{(2)}$, $r(f)f = f$, and $fs(f) = f$.  
	\end{itemize}
	where the map $r:C\rightarrow C^{(0)}$ such that $r(f) = ff\inv$ is called the {\em range map} and the map $s:C\rightarrow C^{(0)}$ such that $s(f) = f\inv f$ is called the {\em source map}.
	when $(g,f)\in C^{(2)}$ then $g$ and $f$ are called {\em composable}, or the set $C^{(2)}$ is the set of {\em composable pairs}. In the case when we have an additional map $f \mapsto f\inv$ from $C$ into $C$ such that for every $f \in C$ we have $(f\inv)\inv = f$, then such a category is called a groupoid and $f\inv$ is the inverse of $f$.
\end{definition}

Note that this definition is equivalent to the one we have utilized in \cite{salilp1} with objects as elements belonging to $C^{(0)}$ while the morphisms are elements of $C$. Corresponding to the Definition~\ref{def:cat} we can now define a category action on a set. 

\begin{definition}\cite{gcp}
	\label{def:cat_action2}
	Suppose $C$ is a small category and $X$ is a set. Then a category $C$ action
	(left) on $X$, is a surjection $r_X:X \rightarrow C^{(0)}$ and a map $(g,x)\mapsto
	g \cdot x$ from $C*X:=\{(f,x)\in C\times X:
	s(f)=r_X(x)\}$ to $X$ such that 
	\begin{enumerate}
		\item if $(f,x)\in C*X$ and $(g,f)\in C^{(2)}$, then 
		$(gf,x),(g,f\cdot x)\in C*X$ and 
		\[
		g\cdot(f\cdot x) = gf \cdot x,
		\]
		\item and $r_X(x) \cdot x = x$ for all $x\in X$. 
	\end{enumerate}
	In the same manner for the right action we use $s_X$ to denote the map from $X$ to $C^{(0)}$ and then define the action on the set $X*C := \{(x,f)\in X\times C: s_X(x) = r(f)\}$.
\end{definition}

However going further we will not utilize this form of a category definition and the action especially in this paper; since it obscures the underlying arrow approach of the whole category theory by treating it as a set with some special set of axioms.

\section{Base Structured Categories in Symmetry}
\label{sec:basesymm}
By relating the contemporary action or transformation groupoid to base structured category we gain an added perspective in the context of symmetries especially regarding the differences between local and global symmetry in basic examples and also cases involving some sort of symmetry hierarchy. It is well-known in the groupoid literature \cite{brown87},\cite{weinstein} that a general group action gives rise to a transformation groupoid. More precisely,

\begin{definition}\cite{morton} Let $\phi : G \ra Aut(X)$, be a usual (set-theoretic) group action then transformation groupoid $X \wquot G$ is the groupoid consisting of:
	\begin{itemize}
		\item \textbf{Objects}: each element $x \in X $; denoted as $\mbox{Ob}(X \wquot G) = X$
		\item \textbf{Morphisms}: $(X \wquot G)(x,y) = (g,x) \in G \times X$, with $\phi_g(x)=y$
		\item \textbf{Composition}: $(g', \phi_g(x))\circ (g, x)= (g'g,x)$
	\end{itemize}
	\label{def:transfngroupoid}
\end{definition}

Now we show that every transformation groupoid is also a base structured category.  

\begin{proposition}
	A classic transformation groupoid $X \wquot G$ is simply a base-structured category ${\int_{\mathbf{G}} \bar{{F}}}$. 	
\end{proposition}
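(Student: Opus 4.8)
The plan is to exhibit the transformation groupoid $X \wquot G$ as an instance of the Grothendieck completion $\int_{\mathbf{G}} \bar{F}$ by first identifying the right data: the base category $\mathbf{G}$, the contravariant functor $\bar{F}$, and then checking that the objects, morphisms, and composition defined in Definition~\ref{def:transfngroupoid} match those dictated by Definition~\ref{defn3} (or its concrete counterpart Definition~\ref{defn5}). Concretely, I would take $\mathbf{G}$ to be the one-object category with $\mathbf{G}(\star,\star) = G$, so that $\mbox{Ob}(\mathbf{G}) = \{\star\}$ and composition in $\mathbf{G}$ is the group multiplication. The functor $\bar{F}:\mathbf{G} \rightarrow \mathbf{Set}$ is the contravariant functor sending $\star \mapsto X$ and each $g \in G$ to the function $\bar{F}g = \phi_{g}\inv = \phi_{g\inv}: X \rightarrow X$; contravariance is exactly what makes $\bar{F}(g'g) = \bar{F}g \circ \bar{F}g'$ hold, since $\phi_{(g'g)\inv} = \phi_{g\inv g'^{-1}} = \phi_{g\inv}\circ\phi_{g'^{-1}}$.

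With this choice, I would unwind Definition~\ref{defn5}: objects of $\int_{\mathbf{G}} \bar{F}$ are pairs $(\star, x)$ with $x \in \bar{\mathbb{F}}\star = X$, which under the evident bijection $(\star,x)\leftrightarrow x$ recovers $\mbox{Ob}(X\wquot G) = X$. A morphism $(\star,x) \rightarrow (\star,y)$ in $\int_{\mathbf{G}}\bar{F}$ is a pair $(g, x)$ with $g:\star\rightarrow\star$ and the constraint $x = \bar{\mathbb{F}}g(y) = \phi_{g\inv}(y)$, i.e. $\phi_{g}(x) = y$ — precisely the defining condition $(X\wquot G)(x,y) = \{(g,x): \phi_g(x) = y\}$. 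Finally I would check composition: in $\int_{\mathbf{G}}\bar{F}$ we have $(g',y)\bullet(g,x) = (g'g, x)$ whenever $y = \phi_g(x)$, which is verbatim the composition rule $(g',\phi_g(x))\circ(g,x) = (g'g,x)$ of Definition~\ref{def:transfngroupoid}; identities $(\mathrm{id}_\star, x) = (e,x)$ likewise match. Since $\mathbf{G}$ is a groupoid (every $g$ invertible) and the fibres are discrete, the completion $\int_{\mathbf{G}}\bar{F}$ is itself a groupoid, consistent with $X\wquot G$ being one.

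The one genuine subtlety — and the step I expect to require the most care — is the variance bookkeeping: a na\"ive assignment $g \mapsto \phi_g$ is covariant, not contravariant, so one must either pass to $\phi_{g\inv}$ as above, or equivalently observe that for a one-object groupoid $\mathbf{G} \cong \mathbf{G}^{op}$ canonically via $g\mapsto g\inv$, so that ``$\int_{\mathbf{G}}\bar F$'' and ``$\mathcal{X}\rtimes_{\mathbb{F}}\mathbf{G}$'' coincide here and the left group action $\phi$ of Definition~\ref{def:transfngroupoid} is recovered directly. I would remark on this identification explicitly, since it is exactly the point the proposition is meant to illuminate — the right action built into $\int_{\mathbf{C}}\bar{\mathbb{F}}$ becomes a left action once $G$ is a group. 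Having matched objects, hom-sets, identities, and composition, the functor sending $(\star,x)\mapsto x$ and $(g,x)\mapsto(g,x)$ is then manifestly an isomorphism of categories $\int_{\mathbf{G}}\bar{F} \xrightarrow{\ \cong\ } X\wquot G$, completing the proof.
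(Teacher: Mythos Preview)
Your proposal is correct and follows essentially the same route as the paper's proof: view $G$ as a one-object category $\mathbf{G}$, take a contravariant $\bar F:\mathbf{G}\to\mathbf{Set}$ with $\bar F(\star)=X$, and match objects, morphisms, and composition of $\int_{\mathbf{G}}\bar F$ against Definition~\ref{def:transfngroupoid}. The paper's argument is terser---it simply asserts the comparison and notes that dropping the redundant first component $\star$ is a relabeling---whereas you spell out explicitly that $\bar F g=\phi_{g^{-1}}$, verify contravariance, and flag the left/right variance issue that the paper leaves implicit; this extra care is a genuine improvement in clarity but not a different method.
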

\begin{proof}
	The group $G$ can be viewed as a single object category $\mathbf{G}$ with an object $\star$ and $\mathbf{G}(\star,\star) = G$. Now consider a strict contravariant functor $\bar{F}: \mathbf{G} \rightarrow \mathbf{Set}$ between small categories, then $\int_{\mathbf{G}} \bar{{F}}$ is a category with objects the pairs $(\star,x)$ where $\star \in \mbox{Ob}(\mathbf{G})$ and $x \in \bar{F}(\star)=X$, morphisms the pairs $(g,x): (\star,x) \rightarrow (\star,y)$ where $g:\star \rightarrow \star \in \mathbf{G}$, $x = \bar{F}g(y)$. The composition is given by $(g',{y}) \bullet (g,{x}) = (g' \circ g,{x})$ since $x = \bar{{F}}g\bar{{F}}g'(z)$. Now comparing with the classic $X \wquot G$, we find that the objects of $X \wquot G$ are simply the objects of ${\int_{\mathbf{G}} \bar{{F}}}$ with the first component dropped which is simply a relabeling with the morphisms staying the same.
\end{proof} 

With this result at our disposal we can now illustrate the well-known difference in the modeling of local and global symmetries through choice of different base categories.

\subsection{Global symmetry of a Homogeneous structure using a single object base category} 
\label{bscgroup}
A homogeneous structure of any sort could be modeled as an object of a general category $\mathbf{D}$ (abstract or concrete). Then a single object base category and the corresponding base structured categories serve to capture the essence of global structure or symmetry on the homogeneous object. More precisely, let $\mathbf{F}: \mathbf{C} \xrightarrow{F} \mathbf{D} \xrightarrow{I} \mathbf{Cat}$ be the functor with $\mathbf{C}$ being a single object category with isomorphisms or more precisely an abstract group (or equivalently a pointed and connected groupoid). In this case, we have the image category $F(\mathbf{C})$ containing a single object $X$ which is precisely the homogeneous structure whose symmetry we are modeling. In the terminology of fibration there a single fibre $\mathbf{E}_{\star}$ within the total category. This necessarily implies that the cartesian lifts of a base arrow are nothing but restrictions of morphisms such as $(f,Ff)$ on the whole underlying set $X$ when $\mathbf{D}$ is $\mathbf{Set}$ or any concrete category. Let us illustrate the idea so far using an example of a cyclic group of order 3, $G_3$ acting on a simple finite set $X = \{1,2,3\}$ using visual techniques of \cite{VGT}. Here $\mathbf{D}$ is chosen as $\mathbf{FinSet}$. We have $\mathbf{F}: \mathbf{G_3} \xrightarrow{F} \mathbf{Finset} \xrightarrow{I} \mathbf{Cat}$ and we consider abstract action as defined in \cite{salilp1} of this sequel. Note that this abstract action is practically expressed as a concrete action $\mathbb{F}: \mathbf{G_3} \xrightarrow{F} \mathbf{Finset} \xrightarrow{U} \mathbf{Set}$ using the standard concrete base-structured category $\mathcal{X} \rtimes_{\mathbb{F}} \mathbf{G_3}$  again defined in \cite{salilp1} of this sequel which is nothing but essentially same as the classic transformation groupoid as we have seen earlier. However we will form base structured category of only abstract action which implies not decomposing the object $X$ into (underlying) elements to emphasize the fact that it is considered as one single global object or precisely \textbf{single basepoint (say $\star$) corresponding to a global homogeneous structure $\mathcal{X}$}.

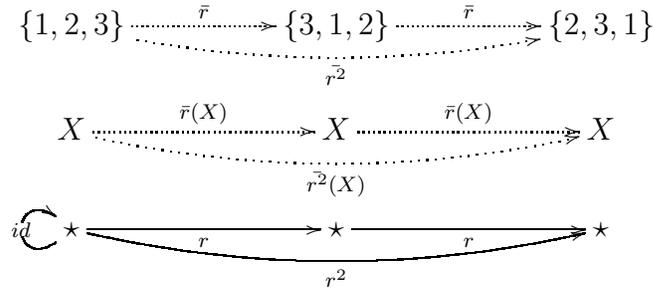
\begin{figure}
	\begin{equation*}
	\xymatrix{
		\{1,2,3\} \ar@{.>}@/_1pc/[rrrr]_{\bar{r^2}} \ar@{.>}[rr]^{\bar{r}}  &  & \{3,1,2\} \ar@{.>}[rr]^{\bar{r}} & & \{2,3,1\} \\
		X \ar@{.>}@/_1pc/[rrrr]_{\bar{r^2}(X)} \ar@{.>}[rr]^{\bar{r}(X)}  &  & X \ar@{.>}[rr]^{\bar{r}(X)} & & X \\
		\star \ar@(dl,ul)[]|{id} \ar@/_1pc/[rrrr]_{r^2} \ar[rr]_{r} &  & \star \ar[rr]_{r} & & \star 
	}
	\end{equation*}
	\caption{symmetry of a set using group as a base category.}
	\label{fig:symm1}
\end{figure}

Here $\mathcal{X} = \{1,2,3\}$ while the category $\mathcal{X} \rtimes_{\mathbf{F}} \mathbf{G_3}$ consists of:
\begin{itemize}
	\item \textbf{objects}: a single object $(\star,X)$ denoted by $\mbox{Ob}(\mathcal{X} \rtimes_{\mathbf{F}} \mathbf{G_3})$
	
	\item \textbf{morphisms}: the collection $\int_{\mathbf{G_3}} {F}((\star,X),(\star,X))=\{(r,\mathrm{id}_{X}):(\star,X)\rightarrow (\star,X)\}$
	
	\item \textbf{identity}: for the single $(\star,X)$, the morphism $\mathrm{id}_{(\star,X)} = (\mathrm{id}_{\star},\mathrm{id}_{X})$
	
	\item \textbf{composition}: if $(r,r^2)\mapsto r\circ r^2$ in $\mathbf{G_3}$ then $((r,\mathrm{id}_{X}),(r^2,\mathrm{id}_{X}))\mapsto (r,\mathrm{id}_{X})\circ (r^2,\mathrm{id}_{X})=(r\circ r^2,\mathrm{id}_{X}\cdot \mathrm{id}_{X})$
	
	\item \textbf{unit laws}: for $(r,\mathrm{id}_{X})$, 
	$(\mathrm{id}_{\star},\mathrm{id}_{X}) \bullet (r,\mathrm{id}_{X}) = (r,\mathrm{id}_{X}) = (r,\mathrm{id}_{X}) \bullet (\mathrm{id}_{\star},\mathrm{id}_{X})$
	
	\item \textbf{associativity}: $(r^2,\mathrm{id}_{X})\bullet ((r,\mathrm{id}_{X})\bullet (r,\mathrm{id}_{X}))=((r^2,\mathrm{id}_{X})\bullet (r,\mathrm{id}_{X}))\bullet (r,\mathrm{id}_{X}) =(r^2,\mathrm{id}_{X})\bullet (r,\mathrm{id}_{X})\bullet (r,\mathrm{id}_{X})$
\end{itemize}

One last time we wish to remind the reader that although this category appears to have trivial arrows in its second component there is truly underlying action of the functors such as $\mathbf{F}r$ on the category $\mathbf{F} (\star) = X$ as explained in this sequel. In case of confusion, one might choose to view this category as $(F,\mathbf{C},\mathbf{D})$. Thus a single object multi-arrow base category introduces a specific (global) symmetry structure on $X$ as shown in Figure~\ref{fig:symm1} expressed by $\mathcal{X} \rtimes_{\mathbf{F}} \mathbf{G_3}$ which intuitively signifies the fact that the structure of $X$ is expressed relative to a chosen base.

\subsection{Local and global symmetries of a non-homogeneous structure using a multi-object base category}
\label{bscgroupoid}
A non-homogeneous structure of any sort could be modeled as a collection of homogeneous components or multiple objects of a general category $\mathbf{D}$ (abstract or concrete). Then a multiple objects base category and the corresponding base structured categories serve to capture the essence of local and global structures or symmetries on the complete object. More precisely, let $\mathbf{F}: \mathbf{C} \xrightarrow{F} \mathbf{D} \xrightarrow{I} \mathbf{Cat}$. be the functor with $\mathbf{C}$ being a multi-object category with isomorphisms or more precisely a non-pointed arbitrarily connected groupoid. In this case, we have the image category $F(\mathbf{C})$ containing multiple objects $X_1,X_2,X_3 ...$ or $X = X_1 \amalg X_2 \amalg X_3 \amalg ...$ which is precisely the complete non-homogeneous structure whose symmetry we are modeling. In the terminology of fibration there are multiple fibres of the type $\mathbf{E}_{\star_1}$ in the total base structured category. This implies that cartesian lifts of a base arrow are local transformations only defined on these local objects or spaces.

Let us again illustrate this idea using an example of a groupoid $\mathcal{G}_3$ acting on a simple finite set $X = \{1,2,3\}$. We have $\mathbf{F}: \mathcal{G}_3 \xrightarrow{F} \mathbf{Finset} \xrightarrow{I} \mathbf{Cat}$ and we consider abstract action again. Here the emphasis is on the fact that we decompose the object $X$ into (underlying) elements or $X = x_1 \amalg x_2 \amalg x_3$. Thus same symmetry structure on $X$ can be expressed by changing the base-structured category to a groupoid with multiple objects where the morphisms are same as those of a group. Hence $X$ automatically gets partitioned by this base providing an illustration of how same symmetry is expressed by choosing a multi object base. In essence we have \textbf{multiple basepoints (say $\star_1$,$\star_2$,...) corresponding to a disjoint union of locally homogeneous structures $\mathcal{X} = x_1 \amalg x_2 \amalg ...$}.  

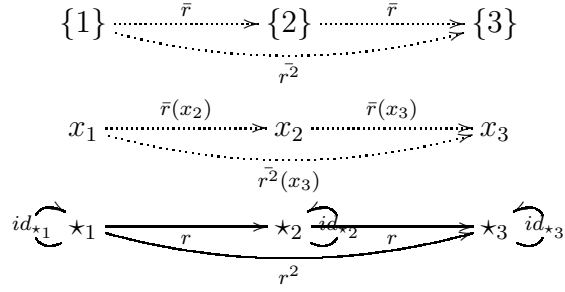
\begin{figure}
	\begin{equation*}
	\xymatrix{
		\{1\} \ar@{.>}@/_1pc/[rrrr]_{\bar{r^2}} \ar@{.>}[rr]^{\bar{r}}  &  & \{2\} \ar@{.>}[rr]^{\bar{r}} & & \{3\} \\
		x_1 \ar@{.>}@/_1pc/[rrrr]_{\bar{r^2}(x_3)} \ar@{.>}[rr]^{\bar{r}(x_2)}  &  & x_2 \ar@{.>}[rr]^{\bar{r}(x_3)} & & x_3 \\
		\star_1 \ar@(dl,ul)[]|{id_{\star_1}} \ar@/_1pc/[rrrr]_{r^2} \ar[rr]_{r} &  & \star_2 \ar@(dr,ur)[]|{id_{\star_2}} \ar[rr]_{r} & & \star_3 \ar@(dr,ur)[]|{id_{\star_3}} 
	}
	\end{equation*}
	\caption{symmetry of a set using groupoid as a base category.}
	\label{fig:symm2}
\end{figure}

Here $\mathbf{B} = \mathcal{G}_3$, a groupoid with three distinct objects as shown in the base while the category $\mathcal{X} \rtimes_{\mathbf{F}} \mathcal{G}_3$ consists of:,

\begin{itemize}
	\item \textbf{objects} $(\star_1,x_1),(\star_2,x_2),(\star_3,x_3)$ denoted by $\mbox{Ob}(\mathcal{X} \rtimes_{\mathbf{F}} \mathcal{G}_3)$
	
	\item \textbf{morphisms} the collection ${\mathcal{X} \rtimes_{\mathbf{F}} \mathcal{G}_3} ((\star_1,x_1),(\star_2,x_2))=\{(r,\mathrm{id}_{x_2}):(\star_1,x_1)\rightarrow (\star_2,x_2)\}$
	
	\item \textbf{identity}: for each object $(\star_1,x_1)$, the morphism $\mathrm{id}_{(\star_1,x_1)} = (\mathrm{id}_{\star_1},\mathrm{id}_{x_1})$
	
	\item \textbf{composition}: $(r,\mathrm{id}_{x_3}) \bullet (r,\mathrm{id}_{x_2}) = (r \circ r,\mathrm{id}_{x_3} \cdot \mathbf{F}r(\mathrm{id}_{x_2})) = (r^2,\mathrm{id}_{x_3})$
	
	\item \textbf{unit laws}: for $(r,\mathrm{id}_{x_2})$, 
	$(\mathrm{id}_{\star_2},\mathrm{id}_{x_2}) \bullet (r,\mathrm{id}_{x_2}) = (r,\mathrm{id}_{x_2}) = (r,\mathrm{id}_{x_2}) \bullet (\mathrm{id}_{\star_1},\mathrm{id}_{x_1})$    
	
	\item \textbf{associativity}:  $(r^2,\mathrm{id}_{x_1})\bullet ((r,\mathrm{id}_{x_3})\bullet (r,\mathrm{id}_{x_2}))=((r^2,\mathrm{id}_{x_1})\bullet (r,\mathrm{id}_{x_3}))\bullet (r,\mathrm{id}_{x_2}) =(r^2,\mathrm{id}_{x_1})\bullet (r,\mathrm{id}_{x_3})\bullet (r,\mathrm{id}_{x_2})$
\end{itemize}

Thus a multi object connected base now models same symmetry but from within the object since $X$ is no longer treated as single relative to the base but as co-product or disjoint union of three distinct singleton objects $X = x_1 \amalg x_2 \amalg x_3$ as shown in Figure~\ref{fig:symm2}. Hence given set $X$ when treated as a single object requires a group (or equivalently a single object category with invertible arrows) to express its symmetry or structure. The same set $X$ when treated as multi-object (commonly termed as object of objects) i.e $X = x_1 \amalg x_2 \amalg x_3$ requires a multi-object groupoid as a base category to express the same symmetry.

\section{Hierarchy of symmetry I: Leyton's generative theory}
\label{sec:wreath}
In this section we describe the difference in the modeling of generative theory of shape proposed in \cite{Leyton86a},\cite{Leyton86b},\cite{Leyton86c},\cite{Leyton01} using groups, groupoids and base structured categories. The model using hierarchy of groups leading to wreath groups is utilized by Leyton in proposing the generative theory of shape; see \cite{Leyton01} and references therein. However the same theory is could be generally modeled using hierarchy of groupoids following \cite{weinstein}.

Consider a simple example of a non-homogeneous structure $X = X_1 \cup X_2$ as shown in Figure~\ref{fig:nonhom} where we have
disjoint union of two homogeneous structures $X_1$ and $X_2$. The complete symmetry of such a structure is well studied in the literature \cite{weinstein} and captured by groupoid. What we wish to illustrate in this section is that the symmetry of such a structure was shown to be also captured by a wreath group by the author in \cite{Leyton01}. We first study how this modeled whenever possibile and then make a comparative analysis with the standard groupoid model to better understand the difference utilizing the perspective offered by the base structured categories. 

\begin{figure}
	\begin{center}	
		\begin{tikzpicture}[>=latex]

		\node at (3,3) {$\mathbb{R}^2$};
		\node at (0.35,0) {$O$};
		
		\draw [-] (-5,0.35)-- (5,0.35);
		\draw [-] (0,3)-- (0,-2);
		
		\fill (-4,0) circle (0.05cm);
		\fill (-2,0) circle (0.05cm);
		\fill (-3,1) circle (0.05cm);
		
		\node at (-4,-0.2) {$x_3$};
		\node at (-2,-0.2) {$x_2$};
		\node at (-3,1.2) {$x_1$};
		
		%

		\fill (4,0) circle (0.05cm);
		\fill (2,0) circle (0.05cm);
		\fill (3,1) circle (0.05cm);
		
		\node at (4,-0.2) {$x_5$};
		\node at (2,-0.2) {$x_6$};
		\node at (3,1.2) {$x_4$};
		
		
		\end{tikzpicture}
	\end{center}
	\caption{Non Homogeneous Structure $X = X_1 \cup X_2$ illustrating 2-level symmetry.}
	\label{fig:nonhom}
\end{figure}
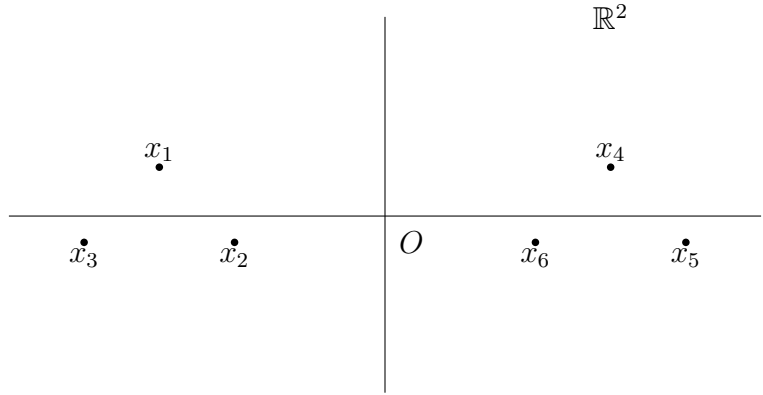

In \cite{Leyton01} the whole non-homogeneous structure $X$ (generated by an artist) is considered to be the \textbf{transfer} of homogeneous prototypes such as $X_1$ in the example we have considered. The homogeneous prototypes are modeled independently in their own separate global spaces. Thus in Figure~\ref{fig:local} the symmetry of prototype $X_1$ is captured in Euclidean space $\mathbb{R}^2$ by the familiar group $D_3$ (which truly acts on the whole space $\mathbb{R}^2$ or in other words a subgroup of $O(2)$).  

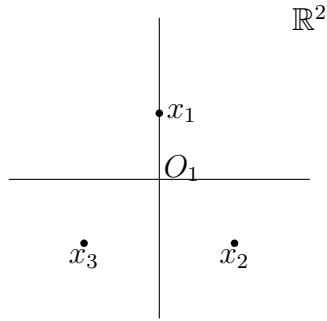
\begin{figure}
	\begin{center}	
		\begin{tikzpicture}[>=latex]

		\draw [-] (2,-0.15)-- (-2,-0.15);
		\draw [-] (0,2) -- (0,-2);
		
		\fill (0,0.73) circle (0.05cm);
		\fill (-1,-1) circle (0.05cm);
		\fill (1,-1) circle (0.05cm);
		
		\node at (2,2) {$\mathbb{R}^2$};
		\node at (0.3,0) {$O_1$};
		
		\node at (-1,-1.2) {$x_3$};
		\node at (1,-1.2) {$x_2$};
		\node at (0.3,0.73) {$x_1$};
		
		\end{tikzpicture}
	\end{center}
	\caption{symmetry of prototype $X_1$ captured by $D_3$.}
	\label{fig:local}
\end{figure}

Hence two local symmetries are captured by dihedral groups of order three. The first local symmetry is captured by the permutation action of group $D_3$ on $X_1$ where $X_1 = \{1,2,3\} = \{x_1,x_2,x_3\}$. Similarly the second local symmetry is captured by the permutation action of another group $D_3$ on $X_2$ where $X_1 = \{4,5,6\} = \{x_4,x_5,x_6\}$. Thus we have two functors which denote actions of type $\mathbf{F}: \mathbf{D_3} \xrightarrow{F} \mathbf{Set} \xrightarrow{I} \mathbf{Cat}$. Now since a group when considered as one object category (refer Section~\ref{sec:pointedness} for single object species of structures) always determines a symmetry of global nature, the morphisms determined by its elements are well defined on every global point of the object $X$. For a collection of multiple independent objects $X = X_1 \cup X_2$  with individual symmetries, the global symmetry of this collection becomes the product of individual symmetries, since the collection gets treated as a single object ${X}$ with no specific partition or injections of sub objects giving rise to a product category $\mathbf{D_3} \times \mathbf{D_3} = \mathbf{B}_2$ or a single functor $\mathbf{F_2}: \mathbf{D_3} \xrightarrow{F} \mathbf{Set} \xrightarrow{I} \mathbf{Cat}$ or as a base category $X \rtimes_{\mathbf{F_2}} (\mathbf{D_3} \times \mathbf{D_3})$. Here since all the objects fuse into a single object (correspondingly captured by single object of base category $\mathbf{B}_2$), there remains a single base point relative perspective. Finally the global symmetry is captured by the action of a reflection group (reflection across y-axis of Euclidean plane) of order two which we will set-wise as $Z_2 = \{e,t\}$. The notation $X_1 \cup X_2 = X_1 \times \mbox{Ob}(\mathbf{Z}_2)$ signifies the fact that complete object is a Leyton transfer of the 3 element set $X_1$ as a prototype. 

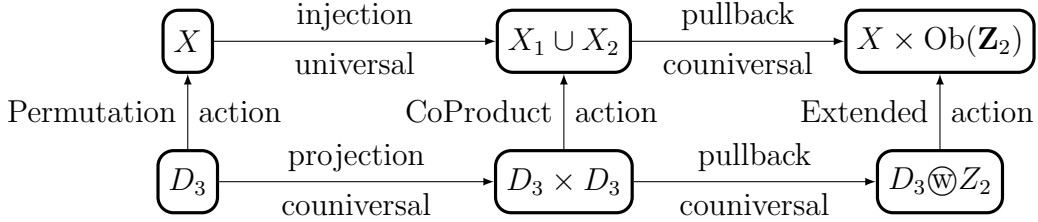
\begin{figure}
	\begin{tikzpicture}[>=latex]
	\tikzstyle{ibox}=[draw=black,very thick,shape=rectangle,rounded corners=0.5em,inner sep=4pt, minimum height=2em,text badly centered,fill=white!5!white];
	
	\node[ibox,align=left] (x) at (0,3) {$X_1 \cup X_2$};  
	\node[ibox,align=left] (x1) at (-5,3) {$X$}; 
	\node[ibox,align=left] (x2) at (5,3) {$X \times \mbox{Ob}(\mathbf{Z}_2)$}; 
	
	\node[ibox,align=left] (y) [below=of x] {$D_3 \times D_3$};
	\node[ibox,align=left] (y1) [below=of x1] {$D_3$}; 
	\node[ibox,align=left] (y2) [below=of x2] {$D_3 \textcircled{w} Z_2$}; 
	
	\draw [->] (x1) -- (x) node [pos=0.5,above] {injection} node [pos=0.5,below] {universal};
	\draw [->] (x) -- (x2) node [pos=0.5,above] {pullback} node [pos=0.5,below,align=left] {couniversal};
	
	\draw [->] (y1) -- (y) node [pos=0.5,above] {projection} node [pos=0.5,below] {couniversal};
	\draw [->] (y) -- (y2) node [pos=0.5,above] {pullback} node [pos=0.5,below,align=left] {couniversal};
	
	\draw [->] (y1) -- (x1) node [pos=0.5,left] {Permutation} node [pos=0.5,right] {action};
	\draw [->] (y) -- (x) node [pos=0.5,left] {CoProduct} node [pos=0.5,right] {action};
	\draw [->] (y2) -- (x2) node [pos=0.5,left] {Extended} node [pos=0.5,right] {action};
	
	\end{tikzpicture}
	\caption{Leyton's generative model using groups}
	\label{fig:leyton1} 
\end{figure}

To summarize, here on a single object $X =\{1,2,3,4,5,6\}$ we have first action of $\mathbf{D_3} \times \mathbf{D_3}$ or the coproduct action in the group theory, which captures the local structure or symmetries but from a global perspective where the given set $Y$ is considered as one object $\star$. The next level of structure or symmetry between the local symmetries is captured by action of $\mathbf{Z_2}$ on $\mathbf{D_3} \times \mathbf{D_3}$ by automorphisms, leading to a wreath group. The 2-hierarchy of symmetries is therefore captured by a group viewed as a single object category $\mathbf{C_3}$ \textcircled{w} $\mathbf{C_2}$ which is a special case of semi-direct group $(\mathbf{C_3} \times \mathbf{C_3}) \rtimes \mathbf{C_2}$. This is the restriction to category of groups or global symmetries.

Now we capture the symmetry of the same non-homogeneous structure using the widely known and studied groupoid model following \cite{weinstein}

Here let the whole object $X$ is modeled as contained in the single space $\mathbb{R}^2$. As we have seen that every group action results into a transformation groupoid let us denote the groupoid formed by the action of group $\Gamma$ of rigid motions of $\mathbb{R}^2$ on $\mathbb{R}^2$ as $G(\Gamma,\mathbb{R}^2) = \{((x,y),\gamma) | x \in \mathbb{R}^2, y \in \mathbb{R}^2, \gamma \in \Gamma, x= \gamma y \} $. Then the groupoid leaving the $X$ invariant is just $G(\Gamma,\mathbb{R}^2)|_X$ consisting of those $g \in G(\Gamma,\mathbb{R}^2)$ where the domain and codomains of the $g$ are simply restricted to $X$. This is the groupoid capturing the reflection symmetry and is isomorphic to the action groupoid produced from action of reflection group $Z_2 = \{e,t\}$ on the set $X$. The local symmetry is captured by another groupoid $G_{loc}$. To define this groupoid we consider the plane $\mathbb{R}^2$ as the disjoint union of $X_1 = X \cap X_1$, $X_2 = X \cap X_2$ and $\mathbb{R}^2 \setminus X$ (plane puncturing out the six points). More precisely  $G_{loc} = \{((x,y),\gamma) | x \in X, y \in X, \gamma \in \Gamma, x= \gamma y \} $ for which $y$ has a neighborhood $u$ in $\mathbb{R}^2$ such that $\gamma(u \cap X_i) \subseteq X_i$. This groupoid captures the dihedral local symmetries of $X_1$ and $X_2$. To connect with the generative theory of Leyton; the local and global groupoids could viewed as base (groupoid) structured categories as shown in Figure~\ref{fig:symm2}.

A groupoid being a multiple object category (refer Section~\ref{sec:pointedness} for multi-object species of structures) captures a symmetry of a local nature since the morphisms determined by its elements are locally defined on specific local point of the object $X$. Thus a collection of multiple independent objects $\mathcal{X} = X_1 \cup X_2$  with individual symmetries, the global symmetry of this collection becomes the amalgamation of individual local symmetries. The collection gets naturally treated as a collection of multiple objects $\mathcal{X}$ with partition or injections of subobjects $X_1,X_2$ giving rise to a coproduct category $\mathbf{B}_2$ which is the Coproduct Groupoid $\mathcal{D}_3 \amalg \mathcal{D}_3$  

Intuitively the separation of objects $\{1,2,3\}$ and $\{4,5,6\}$ (correspondingly captured by multiple objects of base category $\mathbf{B}_2$) keeps multi-basepoints relative perspective intact in this structure allowing the possibility of adding arrows across different objects locally. Indeed every individual object in the whole co-product object retains its individuality and the domain and range set remains divided into multiple subsets on which arrows get defined.

In summary, here the given set $X$ is no longer treated as a single object but a hierarchy of multi-objects leading to global, local, sub-local perspective on symmetries of $X$ (the example we are considering will result into two levels of Base corresponding to two hierarchy levels of symmetries or structures). Roughly speaking this is akin to horizontal categorification where we break a single global object $X$ into multiple sub-objects $X_1$ and $X_2$ viewed as fibred on $\mathbf{B}_1 = \mathcal{Z}_2$ with two objects $\ast_1$ and $\ast_2$. The fibres containing single objects $X_1$ and $X_2$ respectively are further broken into second level of sub-objects $\{1\}$,$\{2\}$,$\{3\}$ and $\{4\}$,$\{5\}$,$\{6\}$ viewed as fibred on $\mathcal{D}_3$ and $\mathcal{D}_3$ with three objects in each of the base groupoids.

The notation $X_1 \cup X_2 = X \times \mbox{Ob}(\mathcal{Z}_2)$ signifies the fact that complete object is an Ehresmann transport (see \cite{brown87} and \cite{charles}) of the 3 element set $X_1$ as a prototype. 

\begin{figure}[h]
	
	\begin{tikzpicture}[>=latex]
	
	\tikzstyle{ibox}=[draw=black,very thick,shape=rectangle,rounded corners=0.5em,inner sep=4pt, minimum height=2em,text badly centered,fill=white!5!white];
	
	\node[ibox,align=left] (x) at (0,3) {$X_1 \cup X_2$};  
	\node[ibox,align=left] (x1) at (-5,3) {$X$}; 
	\node[ibox,align=left] (x2) at (5,3) {$X \times \mbox{Ob}(\mathcal{Z}_2)$}; 
	
	\node[ibox,align=left] (y) [below=of x] {$(\mathcal{D}_3 \amalg \mathcal{D}_3)=\mathbf{B}_2$};
	\node[ibox,align=left] (y1) [below=of x1] {$\mathcal{D}_3$}; 
	\node[ibox,align=left] (y2) [below=of x2] {$\mathbf{B}_2 \rtimes \mathcal{Z}_2$}; 
	
	\draw [->] (x1) -- (x) node [pos=0.5,above] {injection} node [pos=0.5,below] {universal};
	\draw [->] (x) -- (x2) node [pos=0.5,above] {pullback} node [pos=0.5,below,align=left] {couniversal};
	
	\draw [->] (y1) -- (y) node [pos=0.5,above] {injection} node [pos=0.5,below] {universal};
	\draw [->] (y) -- (y2) node [pos=0.5,above] {pullback} node [pos=0.5,below,align=left] {couniversal};
	
	\draw [->] (y1) -- (x1) node [pos=0.5,left] {Permutation} node [pos=0.5,right] {action};
	\draw [->] (y) -- (x) node [pos=0.5,left] {CoProduct} node [pos=0.5,right] {action};
	\draw [->] (y2) -- (x2) node [pos=0.5,left] {Extended} node [pos=0.5,right] {action};
	
	\end{tikzpicture}
	
	\caption{Leyton's generative model using groupoids}
	\label{fig:leyton2} 
\end{figure}
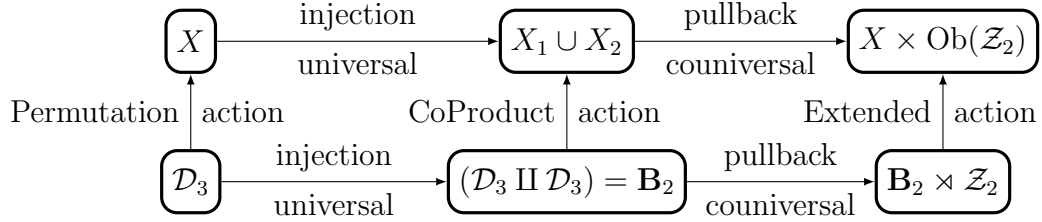

Noting that actions of categories give rise to base structured categories, we get an equivalent model as shown in Figure~\ref{fig:leyton3} using base structured categories.

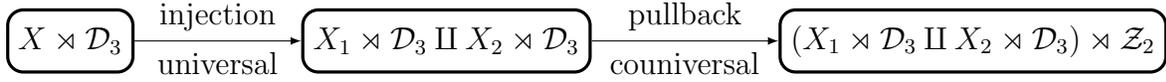
\begin{figure}[h]
	
	\begin{tikzpicture}[>=latex]
	
	\tikzstyle{ibox}=[draw=black,very thick,shape=rectangle,rounded corners=0.5em,inner sep=4pt, minimum height=2em,text badly centered,fill=white!5!white];
	
	\node[ibox,align=left] (x) at (0,3) {$X_1 \rtimes \mathcal{D}_3 \amalg X_2 \rtimes \mathcal{D}_3$};  
	\node[ibox,align=left] (x1) at (-5,3) {$X \rtimes \mathcal{D}_3$}; 
	\node[ibox,align=left] (x2) at (7,3) {$(X_1 \rtimes \mathcal{D}_3 \amalg X_2 \rtimes \mathcal{D}_3) \rtimes \mathcal{Z}_2$}; 
	
	\draw [->] (x1) -- (x) node [pos=0.5,above] {injection} node [pos=0.5,below] {universal};
	\draw [->] (x) -- (x2) node [pos=0.5,above] {pullback} node [pos=0.5,below,align=left] {couniversal};
	
	\end{tikzpicture}
	
	\caption{Leyton's generative model using base structured categories}
	\label{fig:leyton3} 
\end{figure}

Thus, natural generativity and transfer could be more faithfully modeled replacing category theoretic analogues of the sets and groups giving rise to base structured categories as shown in Figure~\ref{fig:generative} which makes use of the perspective of functor as a category action. In \cite{hendrickx} the authors have raised valid concerns on the idea of successive referencing. It appears to us at this moment that the issue might be tackled using the notion of base structured categories along with hierarchy of bases leading to n-categories; refer Section~\ref{h} and Section~\ref{2g} for the details.  

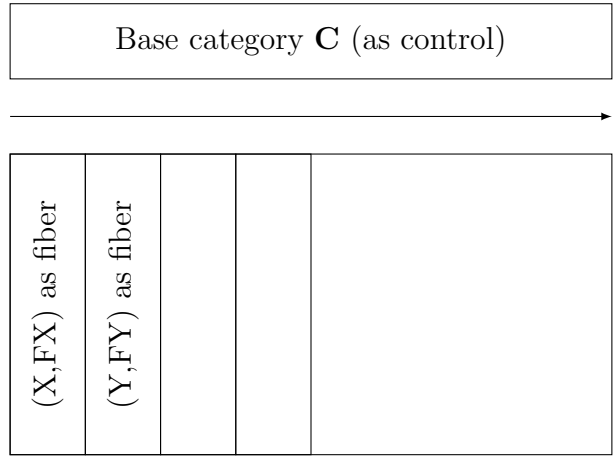
\begin{figure}
	\begin{center}
		\begin{tikzpicture}[>=latex]
		
		\draw (-4,1) rectangle (4,2) node [pos=0.5] {Base category $\mathbf{C}$ (as control)}; 
		
		\draw (-4,-4) rectangle (4,0);
		
		\draw (-4,-4) rectangle (-3,0) node [pos=0.5,rotate=90] {(X,FX) as fiber};
		\draw (-3,-4) rectangle (-2,0) node [pos=0.5,rotate=90] {(Y,FY) as fiber};
		\draw (-2,-4) rectangle (-1,0);
		\draw (-1,-4) rectangle (0,0);
		
		\draw[->,thin] (-4,0.5) -- (4,0.5);
		
		\end{tikzpicture}
		
		\caption{Base structured categorical $\mathcal{X} \rtimes_{\mathbf{F}} \mathbf{C}$ model of Leyton's generative theory}
		\label{fig:generative}
	\end{center}
\end{figure}

\section{Hierarchy of symmetry II: Base structured categorical Models}
\label{h}

In continuation of last section, here we revisit the modeling of arbitrary multiple levels of symmetry (widely known as local/global or internal/external) algebraically using groups, groupoids and base structured categories.

\subsection{Hierarchy of Symmetries}
For considering the hierarchy of bases we utilize notations with subscripts $1,2,....n$ to denote a level in hierarchy. Thus $\mathbf{B}_1$ will denote a base category at the first (outermost) level whereas $\mathcal{X}_{1}$ denotes coproduct object at first level. The subscripts $a,b,...$ will be used to denote the objects or base categories at a given level. Thus $\mathcal{X}_{1a}$ and $\mathcal{X}_{1b}$ will denote two objects within the coproduct object $\mathcal{X}_{1}$ whereas $\mathbf{B}_{2a}$, $\mathbf{B}_{2b}$ denotes base categories at second level (inner) level.

Hence a 2-level fibration of objects will be denoted by $[(\mathcal{X}_{2a} \rtimes_{\mathbf{F_{2a}}} \mathbf{B}_{2a}) \amalg (\mathcal{X}_{2b} \rtimes_{\mathbf{F_{2b}}} \mathbf{B}_{2b}) \amalg ...] \rtimes_{\mathbf{F_{1}}} \mathbf{B}_1$ where $\mathcal{X}_{1a} = (\mathcal{X}_{2a} \rtimes_{\mathbf{F_{2a}}} \mathbf{B}_{2a})$, $\mathcal{X}_{1b} = (\mathcal{X}_{2b} \rtimes_{\mathbf{F_{2b}}} \mathbf{B}_{2b})$, $(\mathcal{X}_{1a} \amalg \mathcal{X}_{1b} \amalg ...) = \mathcal{X}_{1}$ and $\mathbf{B}_{2} = (\mathbf{B}_{2a} \amalg \mathbf{B}_{2b} \amalg ...)$.       

Again taking the same example of Section~\ref{sec:wreath} where we have independent copies of $X$ say $X_1$ and $X_2$  which are translates of each other forming a total set $Y = X_1 \amalg X_2$, the complete symmetry or structure of $Y$ can be expressed or formulated again using groups, groupoids and corresponding base structured categories.

\subsection{Using Hierarchy of Groups $[(\mathcal{X} \rtimes_{\mathbf{F_{2}}} (\mathbf{G_3} \times \mathbf{G_3})] \rtimes_{\mathbf{F_{1}}} \mathbf{G_2}$}

\begin{figure}
	\begin{equation*}
	\xymatrix{
		\{1,2,3,4,5,6\} \ar@{.>}@/_1pc/[rrrr]_{\overline{r^2 \times r^2}(Y)} \ar@{.>}[rr]^{\overline{r \times r}(Y)}  &  & \{3,1,2,6,4,5\} \ar@{.>}[rr]^{\overline{r \times r}(Y)} & & \{2,3,1,5,6,4\} \\
		Y \ar@{.>}@/_1pc/[rrrr]_{\overline{r^2 \times r^2}(Y)} \ar@{.>}[rr]^{\overline{r \times r}(Y)}  &  & Y \ar@{.>}[rr]^{\overline{r \times r}(Y)} & & Y \\
		\star \ar@(dl,ul)[]|{id_{\star}} \ar@/_1pc/[rrrr]_{r^2 \times r^2} \ar[rr]_{r \times r} &  & \star \ar[rr]_{r \times r} & & \star 
	}
	\end{equation*}
	\caption{Inner symmetry using groups}
	\label{fig:hgroup1}
\end{figure}
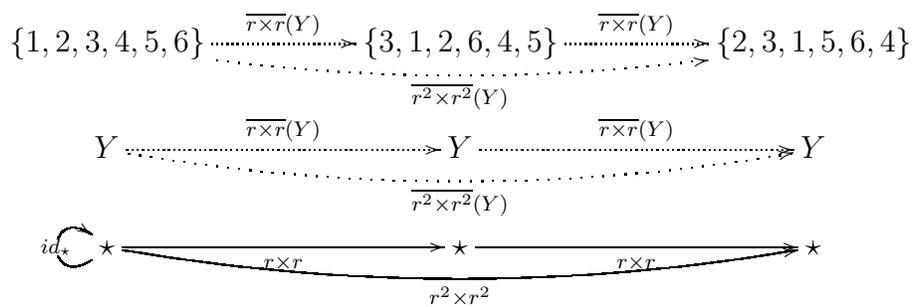

Here second level (inner) base $\mathbf{B}_2 = \mathbf{G_3} \times \mathbf{G_3}$, the direct product group is a single object category. Here $\mathbf{F_2}: \mathbf{B}_2 \xrightarrow{F_2} \mathbf{FinSet} \xrightarrow{I} \mathbf{Cat}$ with $ F_2(\star)= Y = X_1 \cup X_2 = \{1,2,3,4,5,6\}$ and ${Y} \rtimes_{\mathbf{F_{2}}} \mathbf{G_3} \times \mathbf{G_3}$ consisting of:
\begin{itemize}
	\item \textbf{objects}: a single object $(\star,Y)$ denoted by $\mbox{Ob}({Y} \rtimes_{\mathbf{F_{2}}} \mathbf{G_3} \times \mathbf{G_3})$
	
	\item \textbf{morphisms}: the collection $({Y} \rtimes_{\mathbf{F_{2}}} \mathbf{G_3} \times \mathbf{G_3})((\star,Y),(\star,Y))=\{(r \times r,\mathrm{id}_{Y}):(\star,Y)\rightarrow (\star,Y)\}$
	
	\item \textbf{identity}: for the single $(\star,Y)$, the morphism $\mathrm{id}_{(\star,Y)} = (\mathrm{id}_{\star},\mathrm{id}_{Y})$
	
	\item \textbf{composition}: if $(r \times r,r^2 \times r^2)\mapsto (r \times r) \circ (r^2 \times r^2)$ in $\mathbf{G_3}\times \mathbf{G_3}$ then $((r \times r,\mathrm{id}_{Y}),(r^2 \times r^2,\mathrm{id}_{Y}))\mapsto (r \times r,\mathrm{id}_{Y})\circ (r^2 \times r^2,\mathrm{id}_{Y})=((r \times r) \circ (r^2 \times r^2),\mathrm{id}_{Y}\cdot \mathrm{id}_{Y})$
	
	\item \textbf{unit laws}: for $(r \times r,\mathrm{id}_{Y})$, 
	$(\mathrm{id}_{\star},\mathrm{id}_{Y}) \circ (r \times r,\mathrm{id}_{Y}) = (r \times r,\mathrm{id}_{Y}) = (r \times r,\mathrm{id}_{Y}) \circ (\mathrm{id}_{\star},\mathrm{id}_{Y})$
	
	\item \textbf{associativity}: $(r^2 \times r^2,\mathrm{id}_{Y})\circ ((r \times r,\mathrm{id}_{Y})\circ (r \times r,\mathrm{id}_{Y}))=((r^2 \times r^2,\mathrm{id}_{Y})\circ (r \times r,\mathrm{id}_{Y}))\circ (r \times r,\mathrm{id}_{Y}) =(r^2 \times r^2,\mathrm{id}_{Y})\circ (r \times r,\mathrm{id}_{Y})\circ (r \times r,\mathrm{id}_{Y})$
\end{itemize}

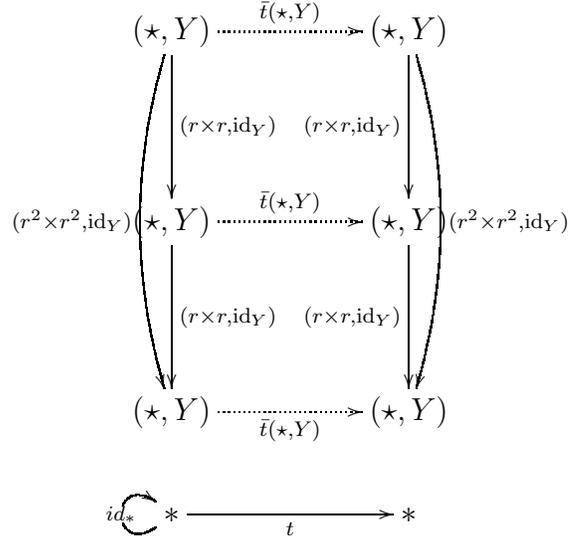
\begin{figure}
	\begin{equation*}
	\xymatrix{
		(\star,Y) \ar@/_1pc/[dddd]_{(r^2 \times r^2,\mathrm{id}_{Y})} \ar[dd]^{(r \times r,\mathrm{id}_{Y})} \ar@{.>}[rr]^{\bar{t}(\star,Y)} &  & (\star,Y) \ar[dd]_{(r \times r,\mathrm{id}_{Y})} \ar@/^1pc/[dddd]^{(r^2 \times r^2,\mathrm{id}_{Y})} \\
		& & \\
		(\star,Y) \ar[dd]^{(r \times r,\mathrm{id}_{Y})} \ar@{.>}[rr]^{\bar{t}(\star,Y)} &  & (\star,Y) \ar[dd]_{(r \times r,\mathrm{id}_{Y})}  \\
		& & \\
		(\star,Y) \ar@{.>}[rr]_{\bar{t}(\star,Y)} &  & (\star,Y) \\
		\ast \ar@(dl,ul)[]|{id_{\ast}} \ar[rr]_{t} &  & \ast 
	}
	\end{equation*}
	\caption{Outer symmetry using groups}
	\label{fig:hgroup2}
\end{figure}

Here first level (outer) base $\mathbf{B}_1 = \mathbf{G_2}$, the cyclic group of order two, is a single object category and $\mathbf{F_{1}}: \mathbf{G_2} \xrightarrow{F_1} \mathbf{Cat} \xrightarrow{I} \mathbf{Cat}$ and $\mathbf{F_{1}}(\ast) = {Y} \rtimes_{\mathbf{F_{2}}} (\mathbf{G_3} \times \mathbf{G_3})$ with $[(Y \rtimes_{\mathbf{F_{2}}} (\mathbf{G_3} \times \mathbf{G_3})] \rtimes_{\mathbf{F_{1}}} \mathbf{G_2}$ consisting of:
\begin{itemize}
	\item \textbf{objects}: a single object $(\ast,F_1(\ast))= (\ast,\star,Y)$ denoted by $\mbox{Ob}([(Y \rtimes_{\mathbf{F_{2}}} (\mathbf{G_3} \times \mathbf{G_3})] \rtimes_{\mathbf{F_{1}}} \mathbf{G_2})$
	
	\item \textbf{morphisms}: the collection of form $((\ast,\star,Y),(\ast,\star,Y))=\{(t,(r \times r,\mathrm{id}_{Y})):(\ast,\star,Y)\rightarrow (\ast,\star,Y)\}$
	
	\item \textbf{identity}: for the single $(\ast,\star,Y)$, the morphism $\mathrm{id}_{(\ast,\star,Y)} = (\mathrm{id}_{\ast},(\mathrm{id}_{\star},\mathrm{id}_{Y}))$
	
	\item \textbf{composition}: if $(t,t)\mapsto t \bullet t$ in $\mathbf{G_2}$ then $((t,(r \times r,\mathrm{id}_{Y})),(t,(r \times r,\mathrm{id}_{Y})))\mapsto (t,(r \times r,\mathrm{id}_{Y}))\bullet (t,(r \times r,\mathrm{id}_{Y}))=(t \bullet t,((r \times r) \circ (r \times r),\mathrm{id}_{Y}))$
\end{itemize}

The usual unit laws and associativity axiom can be verified easily. Henceforth we will not mention the unit laws and associativity unless explicitly required within a particular context.

\subsection{Using Hierarchy of Groupoids $[(\mathcal{X}_{2a} \rtimes_{\mathbf{F_{2a}}} \mathcal{G}_3) \amalg (\mathcal{X}_{2b} \rtimes_{\mathbf{F_{2b}}} \mathcal{G}_3)] \rtimes_{\mathbf{F_{1}}} \mathcal{G}_2$}

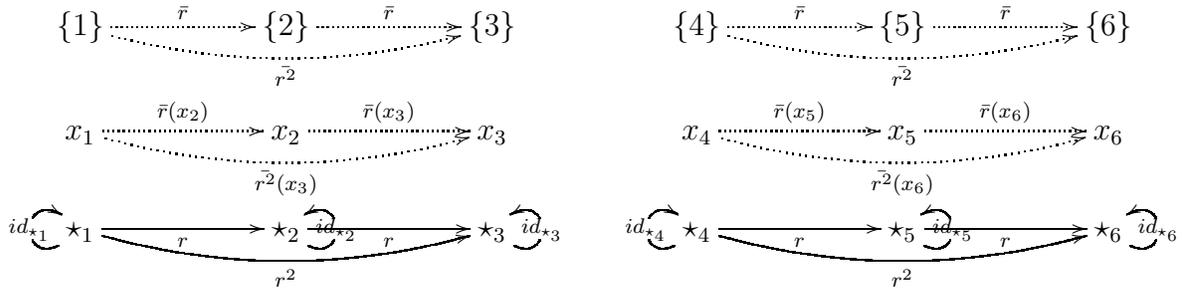
\begin{figure}
	\begin{equation*}
	\xymatrix{
		\{1\} \ar@{.>}@/_1pc/[rrrr]_{\bar{r^2}} \ar@{.>}[rr]^{\bar{r}}  &  & \{2\} \ar@{.>}[rr]^{\bar{r}} & & \{3\} \\
		x_1 \ar@{.>}@/_1pc/[rrrr]_{\bar{r^2}(x_3)} \ar@{.>}[rr]^{\bar{r}(x_2)}  &  & x_2 \ar@{.>}[rr]^{\bar{r}(x_3)} & & x_3 \\
		\star_1 \ar@(dl,ul)[]|{id_{\star_1}} \ar@/_1pc/[rrrr]_{r^2} \ar[rr]_{r} &  & \star_2 \ar@(dr,ur)[]|{id_{\star_2}} \ar[rr]_{r} & & \star_3 \ar@(dr,ur)[]|{id_{\star_3}} 
	}
	\qquad
	\xymatrix{
		\{4\} \ar@{.>}@/_1pc/[rrrr]_{\bar{r^2}} \ar@{.>}[rr]^{\bar{r}}  &  & \{5\} \ar@{.>}[rr]^{\bar{r}} & & \{6\} \\
		x_4 \ar@{.>}@/_1pc/[rrrr]_{\bar{r^2}(x_6)} \ar@{.>}[rr]^{\bar{r}(x_5)}  &  & x_5 \ar@{.>}[rr]^{\bar{r}(x_6)} & & x_6 \\
		\star_4 \ar@(dl,ul)[]|{id_{\star_4}} \ar@/_1pc/[rrrr]_{r^2} \ar[rr]_{r} &  & \star_5 \ar@(dr,ur)[]|{id_{\star_5}} \ar[rr]_{r} & & \star_6 \ar@(dr,ur)[]|{id_{\star_6}} 
	}
	\end{equation*}
	\caption{Inner symmetries using groupoids}
	\label{fig:hgroupoid1}
\end{figure}

Here second level (inner) base $\mathbf{B}_2 = \mathcal{G}_3 \amalg \mathcal{G}_3$, a groupoid with six distinct objects as shown in the base of Figure~\ref{fig:hgroupoid1}. Here $F_2: (\mathcal{G}_3 \amalg \mathcal{G}_3) \rightarrow \mathbf{FinSet}$ which is same as $F_2 = F_{2a} \amalg F_{2b}$ and ${Y} = X_1 \cup X_2$. Then ${\mathcal{X}_2} \rtimes_{\mathbf{F_{2}}} \mathbf{B}_2$ is a coproduct of two base structured categories which is $(\mathcal{X}_{2a} \rtimes_{\mathbf{F_{2a}}} \mathcal{G}_3) \amalg (\mathcal{X}_{2b} \rtimes_{\mathbf{F_{2b}}} \mathcal{G}_3)$ consisting of:
\begin{itemize}
	\item \textbf{objects} $(\star_1,x_1),(\star_2,x_2),(\star_3,x_3),(\star_4,x_4),(\star_5,x_5),(\star_6,x_6)$
	
	\item \textbf{morphisms} the collection $((\star_1,x_1),(\star_2,x_2))=\{(r,\mathrm{id}_{x_2}):(\star_1,x_1)\rightarrow (\star_2,x_2)\}$ Note that there will not be any arrows across indices's (1,2,3) to (4,5,6) since the two base groupoids are not connected.
	
	\item \textbf{identity}: for each object $(\star_1,x_1)$, the morphism $\mathrm{id}_{(\star_1,x_1)} = (\mathrm{id}_{\star_1},\mathrm{id}_{x_1})$
	
	\item \textbf{composition}: $(r,\mathrm{id}_{x_3}) \bullet (r,\mathrm{id}_{x_2}) = (r \circ r,\mathrm{id}_{x_3} \cdot {\mathbf{F_{2}}}r((\mathrm{id}_{x_2})) = (r^2,\mathrm{id}_{x_3})$.
	
\end{itemize}
The usual unit laws and associativity axiom are straightforward.

\begin{figure}
	\begin{equation*}
	\xymatrix{
		(\star_1,x_1) \ar@/_2pc/[dddd]_{(r^2,\mathrm{id}_{x_3})} \ar[dd]^{(r,\mathrm{id}_{x_2})} \ar@{.>}[rr]^{F_1t(\star_1,x_1)} &  & (\star_4,x_4) \ar[dd]_{(r,\mathrm{id}_{x_5})} \ar@/^2pc/[dddd]^{(r^2,\mathrm{id}_{x_6})} \\
		& & \\
		(\star_2,x_2) \ar[dd]^{(r,\mathrm{id}_{x_3})} \ar@{.>}[rr]^{F_1t(\star_2,x_2)} &  & (\star_5,x_5) \ar[dd]_{(r,\mathrm{id}_{x_6})}  \\
		& & \\
		(\star_3,x_3) \ar@{.>}[rr]_{F_1t(\star_3,x_3)} &  & (\star_6,x_6) \\
		\ast_1 \ar@(dl,ul)[]|{id_{\ast_1}} \ar[rr]_{t} &  & \ast_2 \ar@(dr,ur)[]|{id_{\ast_2}}
	}
	\end{equation*}
	\caption{Outer symmetry using groupoids}
	\label{fig:hgroupoid2}
\end{figure}

In this case the first level (outer) base $\mathbf{B}_1 = \mathcal{G}_2$, the cyclic groupoid of order two, is a two object category with two arrows and $\mathbf{F_{1}}: \mathcal{G}_2 \xrightarrow{F_1} \mathbf{Cat} \xrightarrow{I} \mathbf{Cat}$ and $\mathbf{F_{1}}(\ast_1) = (\mathcal{X}_{2a} \rtimes_{\mathbf{F_{2a}}} \mathcal{G}_3)$ while $[( \mathcal{X}_2 \rtimes_{\mathbf{F_{2}}} (\mathcal{G}_3 \amalg \mathcal{G}_3)] \rtimes_{\mathbf{F_{1}}} \mathcal{G}_2$ consists of:

\begin{itemize}
	\item \textbf{objects}: the pairs of type $(\ast_1,(\star_1,x_1))$ denoted by $\mbox{Ob}([( \mathcal{X}_2 \rtimes_{\mathbf{F_{2}}} (\mathcal{G}_3 \amalg \mathcal{G}_3)] \rtimes_{\mathbf{F_{1}}} \mathcal{G}_2)$
	
	\item \textbf{morphisms}: $[( \mathcal{X}_2 \rtimes_{\mathbf{F_{2}}} (\mathcal{G}_3 \amalg \mathcal{G}_3)] \rtimes_{\mathbf{F_{1}}} \mathcal{G}_2((\ast_1,(\star_1,x_1)),(\ast_2,(\star_5,x_5)))$ are pairs $\{(t,(r,\mathrm{id}_{x_4})):(\ast,(\star,Y))\rightarrow (\ast,(\star,Y))\}$ where $t:\ast_1 \rightarrow \ast_2 \in \mathcal{G}_2$, $(r,\mathrm{id}_{x_4}):\mathbf{F_{1}}t(\star_1,x_1) \rightarrow (\star_5,x_5)$
	
	\item \textbf{identity}: for $(\ast_1,(\star_1,x_1))$, the morphism $\mathrm{id}_{(\ast_1,\star_1,x_1)} = (\mathrm{id}_{\ast_1},(\mathrm{id}_{\star_1},\mathrm{id}_{x_1}))$
	
	\item \textbf{composition}: $(t,(r,\mathrm{id}_{x_3}) \bullet (t,(r,\mathrm{id}_{x_5}) = (t \circ t,(r,\mathrm{id}_{x_3}) \cdot \mathbf{F_{1}}t(r,\mathrm{id}_{x_5})) = (\mathrm{id}_{\ast_1},(r^2,\mathrm{id}_{x_3}))$ since $\mathbf{F_{1}}t(r,\mathrm{id}_{x_5}) = (r,\mathrm{id}_{x_2})$ 
\end{itemize}

The simple example of permutation of finite sets illustrated that hierarchy of algebraic structures could be modeled using hierarchy of appropriate base structured categories. It follows naturally that base structured categories from groupoid bases over group bases in modeling hierarchy of symmetries are preferred due to the following heuristic reasons:
\begin{itemize}
	\item The ability to incorporate local non-uniform symmetries using multi-object and arbitrarily connected groupoid base.
	\item Ease of extraction both in terms of computational efficiency, parallel processing and non-redundant processing in a groupoid base.
	\item The ability to proceed in distinct steps by separation of objects at each step is naturally provided by the hierarchy of groupoids rather than groups.    
\end{itemize}

\subsection{Connection of hierarchy of base structured categories with standard composite of Grothendieck fibrations}
The hierarchy of base structured categories are naturally connected with the classic composite fibration in a obvious manner since multi-level base structured category is a special case of composite fibration of categories.

First we recall the classic composite of fibration Lemma from \cite{BJ}.  
\begin{lemma}\cite{BJ}
	Let $P: \mathbf{E} \rightarrow \mathbf{B_2}_{fib}$ and $Q: \mathbf{B_2}_{fib} \rightarrow \mathbf{B_1}_{fib}$ be the usual fibrations; then
	1. Composite functor $QP: \mathbf{E} \rightarrow \mathbf{B_1}_{fib}$ is a usual proper fibration where $f$ in $\mathbf{E}$ is $QP$-cartesian ; $f$ is $P$-cartesian and $Pf$ is $Q$-cartesian. Further $P$ and $Q$ both being cloven or split, the composite fibration $QP$ is respectively cloven or split; and \\
	2. For every object $I \in \mathbf{B_1}_{fib}$, we obtain by restriction a functor $P_I:\mathbf{E}_I \rightarrow \mathbf{{B_2}_I}_{fib}$ where $\mathbf{E}_I = (QP)^{-1}(I)$ and $\mathbf{{B_2}_I}_{fib} = Q^{-1}(I)$. All such restriction functors $P_I$s are themselves fibrations. 
\end{lemma}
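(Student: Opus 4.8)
The plan is to derive both parts from the definition of a fibration together with one sublemma on pasting of cartesian arrows, where the real content lies. Recall that a functor is a fibration exactly when, for every object of the total category and every arrow of the base whose codomain is that object's image, a cartesian lift exists. For Part 1 I would first construct the lift: given $E\in\mathbf{E}$ and $u\colon I\to QPE$ in $\mathbf{B_1}_{fib}$, use the fibration $Q$ to choose a $Q$-cartesian lift $\bar u\colon B\to PE$ of $u$, then use the fibration $P$ to choose a $P$-cartesian lift $\tilde u\colon E'\to E$ of $\bar u$; since $QP\tilde u=Q\bar u=u$, this $\tilde u$ lies over $u$, and it remains only to see that it is $QP$-cartesian. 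That follows from the sublemma: \emph{if $f$ is $P$-cartesian and $Pf$ is $Q$-cartesian, then $f$ is $QP$-cartesian}. Its proof is a two-stage diagram chase: given a competitor $g$ with the same codomain as $f$ together with a factorization of $QPg$ through $QPf$ in $\mathbf{B_1}_{fib}$, one first invokes $Q$-cartesianness of $Pf$ to produce the intermediate arrow in $\mathbf{B_2}_{fib}$, then invokes $P$-cartesianness of $f$ to produce the unique mediating arrow in $\mathbf{E}$, the two uniqueness clauses composing to give uniqueness for $QP$.

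For the stated equivalence I would also prove the converse, that a $QP$-cartesian $f$ for which $Pf$ is $Q$-cartesian is itself $P$-cartesian (indeed, when $P,Q$ are fibrations, $f$ being $QP$-cartesian already forces $Pf$ to be $Q$-cartesian): lift $QPf$ first through $Q$ and then through $P$ to obtain, by the easy direction just proved, a $QP$-cartesian arrow; compare it with $f$ using the fact that two $QP$-cartesian lifts of a fixed arrow differ by a vertical isomorphism, which is cartesian for $P$ and for $Q$ alike; then $f$ is a composite of a $P$-cartesian arrow with an iso, hence $P$-cartesian, and likewise $Pf$ is a composite of a $Q$-cartesian arrow with an iso, hence $Q$-cartesian. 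This is exactly where the hypothesis that $P$ and $Q$ are genuine fibrations enters. The cloven/split clause is then formal: sending $u$ to the $P$-lift of the $Q$-lift is a cleavage for $QP$ assembled componentwise from the two given cleavages, so it is split as soon as both of those are. For Part 2, note first that $P$ restricts, since $QPE=I$ forces $PE\in Q^{-1}(I)=\mathbf{{B_2}_I}_{fib}$ and $QPf=\mathrm{id}_I$ forces $Q(Pf)=\mathrm{id}_I$, so $P_I\colon\mathbf{E}_I\to\mathbf{{B_2}_I}_{fib}$ is well defined. To see it is a fibration, given $E\in\mathbf{E}_I$ and $v\colon B\to PE$ with $Qv=\mathrm{id}_I$, take a $P$-cartesian lift $\tilde v\colon E'\to E$ in $\mathbf{E}$; then $QP\tilde v=Qv=\mathrm{id}_I$ puts $E'$ and $\tilde v$ inside $\mathbf{E}_I$, and $\tilde v$ is $P_I$-cartesian because the $P$-cartesian factorization property, fed only test data lying over $\mathrm{id}_I$, returns a mediating arrow whose $P$-image lies over $\mathrm{id}_I$ — hence an arrow of $\mathbf{E}_I$ — with uniqueness inherited verbatim.

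The single genuinely delicate step is the pasting sublemma together with its converse in Part 1: arranging the two successive liftings and tracking the uniqueness statements so that they assemble correctly, and recognizing that the \emph{only if} direction fails for arbitrary functors and really needs $P$ and $Q$ to be fibrations. Once that sublemma is isolated, the remainder of Part 1 and all of Part 2 are bookkeeping; since this is the composite-fibration lemma of \cite{BJ}, I would state the pasting sublemma explicitly and present Parts 1 and 2 as immediate consequences.
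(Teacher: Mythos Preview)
Your argument is correct and is the standard proof of the composite-fibration lemma. Note, however, that the paper does not supply a proof of its own: it simply writes ``Refer Lemma 1.5.5 of \cite{BJ}'' and moves on, so there is nothing substantive to compare against beyond the cited source, whose proof your sketch faithfully reconstructs.
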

\begin{proof}
	Refer Lemma 1.5.5 of \cite{BJ}.
\end{proof}

Observe that $\mathbf{B_1}_{fib}$ of the classic case is same as $\mathbf{B}_1$ in the hierarchy of base structured categories. Now since in hierarchical base structured category; each object is itself a base structured category which is also naturally an object of $\mathbf{Cat}$ we have standard split fibration with $\mathcal{X}_{1a}$, $\mathcal{X}_{1b}$ as objects of $\mathbf{Cat}$. Thus if we denote the pseudofunctor corresponding to split fibration $QP$ by $\Psi: \mathbf{B_1}_{fib}^{op} \rightarrow \mathbf{Cat}$, then $\Psi(I) = \mathcal{X}_{1a}$ and so on. Now since each base structured category $\mathcal{X}_{1a}$ also has a corresponding base category $\mathbf{B}_{2a}$, we naturally also have a functor $F_{1B} : \mathbf{B}_1 \rightarrow \mathbf{Cat}$ with $F_{1B}(I) = \mathbf{B}_{2a}$ where $I$ is the object of $\mathbf{B}_1$ in the usual base category of classic fibration. Hence the total category $\mathbf{B_2}_{fib}$ of the classic split fibration $Q$ is given by $\mathbf{B_2}_{fib} = \mathbf{B}_{2} \rtimes_{F_1B} \mathbf{B}_1$ where $\mathbf{B}_{2} = (\mathbf{B}_{2a} \amalg \mathbf{B}_{2b} \amalg ...)$ is the coproduct of second level base categories. Thus $\mathbf{E} = [(\mathcal{X}_{2a} \rtimes_{\mathbf{F_{2a}}} \mathbf{B}_{2a}) \amalg (\mathcal{X}_{2b} \rtimes_{\mathbf{F_{2b}}} \mathbf{B}_{2b}) \amalg ...] \rtimes_{\mathbf{F_{1}}} \mathbf{B}_1$.

\section{Hierarchy of Structures: 2-groups to n-category theory}
\label{2g}
In this section we utilize the results of \cite{morton} especially that a 2-group captures the symmetry of a category and results into a transformation double category. We show a hierarchy of 2-groups capturing the symmetry of hierarchy of base structured categories as studied in Section~\ref{h} resulting into hierarchy of transformation double categories such as $(\mathcal{X}_{2a} \rtimes_{\mathbf{F_{2a}}} \mathbf{B}_{2a}) \wquot \mathbf{2G}$, $(\mathcal{X}_{2b} \rtimes_{\mathbf{F_{2b}}} \mathbf{B}_{2b}) \wquot \mathbf{2G}$ within $(\mathcal{X}_{1} \rtimes_{\mathbf{F_{1}}} \mathbf{B}_{1}) \wquot \mathbf{2G}$ naturally leading us into n-groups or higher category theory. Our primary motivation for exploring such a connection is to get some insights on structure of possible algorithms in connection with signal representation since it seems to us at this moment that such a multi-level structure is akin to multi-resolution analysis in wavelet signal representation. The precise connections will become clear only in the future.     

We start by recalling the definition of a 2-group. 

\begin{definition}\cite{morton}
	A 2-group $\mathbf{2G}$ is a 2-category with a single object ($\mathbf{2G}^{(0)} =
	\{ \star \}$), for which all 1-morphisms $\gamma \in \mathbf{2G}^{(1)}$ and
	2-morphisms $\chi \in \mathbf{2G}^{(2)}$ are invertible.
\end{definition}

This definition is studied in \cite{morton} through connection with an equivalent categorical group and crossed module. Refer to \cite{morton} and references therein for a review of symmetry using category theory or the categorification of local and global symmetry. We demonstrate the transformation double categories produced at each level by a hierarchy of 2-groups which correspondingly capture the symmetries of base structured categories at each level. This leads one into the realm of n-groups and higher categories. The outermost 2-group is denoted as in Equation~\ref{eq:outer2g} where $\gamma_o$ and $\gamma'_o$ are its 1-morphisms whereas $\chi_o$ is the 2-morphism. Similarly the inner 2-groups for the inner base structured categories are shown in Equation~\ref{eq:inner2g}.

\begin{equation}
\xymatrix{
	(\mathcal{X}_{1} \rtimes_{\mathbf{F_{1}}} \mathbf{B}_{1}) & & (\mathcal{X}_{1} \rtimes_{\mathbf{F_{1}}} \mathbf{B}_{1}) \ar@/^2pc/[ll]^{\gamma'_o}="3"  \ar@/_2pc/[ll]_{\gamma_o}="2" \\
	\ar@{=>}"2"+<0ex,-2ex> ;"3"+<0ex,+2ex>^{\chi_o}
}
\label{eq:outer2g}
\end{equation}

\begin{equation}
\xymatrix{
	(\mathcal{X}_{2a} \rtimes_{\mathbf{F_{2a}}} \mathbf{B}_{2a}) & & (\mathcal{X}_{2a} \rtimes_{\mathbf{F_{2a}}} \mathbf{B}_{2a}) \ar@/^2pc/[ll]^{\gamma'_1}="3"  \ar@/_2pc/[ll]_{\gamma_1}="2" \\
	\ar@{=>}"2"+<0ex,-2ex> ;"3"+<0ex,+2ex>^{\chi_1}
}
\xymatrix{
	(\mathcal{X}_{2b} \rtimes_{\mathbf{F_{2b}}} \mathbf{B}_{2b}) & & (\mathcal{X}_{2b} \rtimes_{\mathbf{F_{2b}}} \mathbf{B}_{2b}) \ar@/^2pc/[ll]^{\gamma'_2}="3"  \ar@/_2pc/[ll]_{\gamma_2}="2" \\
	\ar@{=>}"2"+<0ex,-2ex> ;"3"+<0ex,+2ex>^{\chi_2}
}
\label{eq:inner2g}
\end{equation}

Thus hierarchy of base structured categories naturally leads one into the domain of higher category theory (see both \cite{leinster} and \cite{baeztowards} and references therein) where each level of symmetry or base structured category has an automorphism group given by a \textbf{2-group} (or equivalently a \textbf{categorical group} or a \textbf{crossed module}) all in a hierarchical fashion. This leads to an interesting structure of a hierarchy of naturality squares within naturality squares as shown in Figure~\ref{fig:natural}. Thus a 2-level hierarchy results into 3-morphisms ($\gamma_o, \gamma'_o$) and 4-morphisms ($\chi_o$) of outer 2-group relative to 1-morphisms ($\gamma_1, \gamma'_1,$) and 2-morphisms ($\chi_1$) of inner 2-groups.

\begin{figure}
	\begin{center}
		\begin{tikzpicture}[>=latex]
		
		\foreach \x in {1.5,2.5} {
			\foreach \y in {1.5,2.5} {
				\fill (\x,\y) circle (0.07cm);
				\fill (-\x,-\y) circle (0.07cm);
				\fill (\x,-\y) circle (0.07cm);
				\fill (-\x,\y) circle (0.07cm);
				\draw[-] (1.7,\y)--(2.3,\y) node [midway,above] {$2$};
				\draw[-] (1.7,-\y)--(2.3,-\y) node [midway,below] {$2$};
				\draw[-] (-2.3,\y) -- (-1.7,\y) node [midway,above] {$2$};
				\draw[-] (-2.3,-\y) -- (-1.7,-\y)node [midway,below] {$2$};
			}
			\draw [-] (\x,1.7) --(\x,2.3) node [midway,right] {$1$};
			\draw [-] (-\x,1.7) -- (-\x,2.3)node [midway,left] {$1$};
			\draw [-] (\x,-1.7) -- (\x,-2.3) node [midway,right] {$1$};
			\draw [-] (-\x,-1.7) -- (-\x,-2.3)node [midway,left] {$1$};
		}
		
		\draw [-] (-1.3,2) -- (1.3,2) node [midway, above] {$4$};
		\draw [-] (-1.3,-2) -- (1.3,-2) node [midway, above] {$4$};
		
		\draw[-] (2,-1.3) -- (2,1.3) node [midway,left] {$3$};
		\draw[-] (-2,-1.3) -- (-2,1.3) node [midway,left] {$3$};
		
		\end{tikzpicture}
		
	\end{center}
	\caption{Hierarchy of naturality squares leading to 3-,4-morphisms}
	\label{fig:natural}
\end{figure}
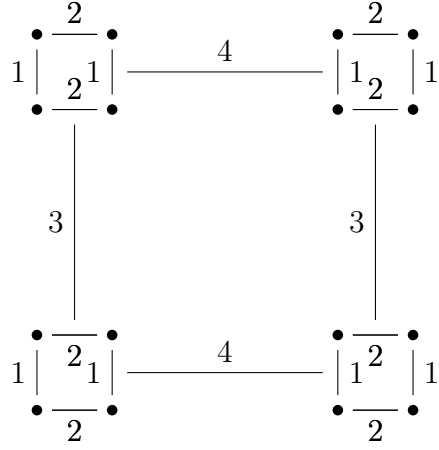

We now use the example of $[(\mathcal{X}_1 \rtimes_{\mathbf{F_{2a}}} \mathcal{G}_3) \amalg (\mathcal{X}_2 \rtimes_{\mathbf{F_{2b}}} \mathcal{G}_3)] \rtimes_{\mathbf{F_{1}}} \mathcal{G}_2$ to show how the fundamental cells or squares corresponding to $(\chi,f) \in \mathbf{2G} \times \mathbf{C}$ from \cite{morton} appear in a hierarchy starting with the inner $\mathbf{2G}$ to outer $\mathbf{2G}$.

\begin{equation}
\xymatrix{
	\mathbf{2G} \ar[r]^-{\Phi_o} \ar[rd]_{\hat{\phi}_o} & Aut(\mathcal{X} \rtimes_{\mathbf{F_{1}}} \mathcal{G}_2) \ar[d] \\
	& \mathcal{X} \rtimes_{\mathbf{F_{1}}} \mathcal{G}_2
}
\label{eq:outer2gact}
\end{equation}

Here $\hat{\phi}_o : \mathbf{2G} \times (\mathcal{X} \rtimes_{\mathbf{F_{1}}} \mathcal{G}_2) \rightarrow (\mathcal{X} \rtimes_{\mathbf{F_{1}}} \mathcal{G}_2)$ is the 2-group action. Now ${\mathbf{\Phi}_o}$ is a 2-functor from $\mathbf{2G}$ to 2-category of invertible endofunctors on category $(\mathcal{X} \rtimes_{\mathbf{F_{1}}} \mathcal{G}_2)$. The fundamental unit (or cell or naturality) square of co-domain endofunctor category is the square corresponding to a morphism $(\chi,f) \in \mathbf{2G} \times \mathbf{C}$ which is the action of $\mathbf{2G}$ on $\mathbf{C}$; see \cite{morton}.

\subsection{The naturality square corresponding to inner 2-group action}

First the symmetry of $\mathcal{X}_1 \rtimes_{\mathbf{F_{2a}}} \mathcal{G}_3$ is expressed using an (inner) 2-group as shown in Equation~\ref{eq:inner2gact}. 

\begin{equation}
\xymatrix{
	\mathbf{2G} \ar[r]^-{\Phi} \ar[rd]_{\hat{\phi}} & Aut(\mathcal{X}_1 \rtimes_{\mathbf{F_{2a}}} \mathcal{G}_3) \ar[d] \\
	& \mathcal{X}_1 \rtimes_{\mathbf{F_{2a}}} \mathcal{G}_3
}
\label{eq:inner2gact}
\end{equation}

The 2-functor $\Phi$ defines a mapping on the single object given by $\Phi(\star) = (\mathcal{X}_1 \rtimes_{\mathbf{F_{2a}}} \mathcal{G}_3)$. The mapping of 1-morphism is denoted as $\Phi(\gamma)$ while that of the 2-morphism is denoted as $\Phi(\gamma,\chi)$ where $\chi: \gamma \rightarrow \gamma'$ is the 2-morphism of the $\mathbf{2G}$.

\begin{equation}
\xymatrix{
	X \rtimes_{\mathbf{F_{2a}}} \mathcal{G}_3 & & X \rtimes_{\mathbf{F_{2a}}} \mathcal{G}_3 \ar@/^2pc/[ll]^{\Phi(\gamma')=\gamma' \act () }="3"  \ar@/_2pc/[ll]_{\Phi(\gamma)=\gamma \act () }="2" \\
	\ar@{=>}"2"+<0ex,-2ex> ;"3"+<0ex,+2ex>^{\Phi_{(\gamma,\chi)}}
}
\end{equation}

The action of 1-morphisms $\gamma,\gamma'$ and 2-morphism $\chi$ between these on a given arrow $(r,\mathrm{id}_{x_2})$ in $\mathcal{X}_1 \rtimes_{\mathbf{F_{2a}}} \mathcal{G}_3$ generates a fundamental square as shown in Equation~\ref{eq:inner}.

\begin{equation}\label{eq:inner}
\resizebox{\linewidth}{!}{
	$\xymatrix{
		(\star_1,x_1) \ar@{-->}[drrr] \ar[ddd]_{(\gamma,(\star_1,x_1))}="00" \ar@{=}[dr] \ar[rr]^{(r,\mathrm{id}_{x_2})}  & & (\star_2,x_2) \ar[ddd]^{(\gamma,(\star_2,x_2))}="10" \ar@{=}[dr] & \\
		& (\star_1,x_1)) \ar[ddd]_(.4){(\gamma',(\star_1,x_1))}="01" \ar[rr]_(.4){(r,\mathrm{id}_{x_2})}  & & (\star_2,x_2) \ar[ddd]^{(\gamma',(\star_2,x_2))}="11"\\
		& & & \\
		\gamma \act (\star_1,x_1) \ar@{-->}[drrr] \ar[dr]_{\Phi_{(\gamma,\chi)}((\star_1,x_1))} \ar[rr]^(.6){\gamma \act (r,\mathrm{id}_{x_2})} & \uurtwocell<\omit>{\omit *+[F]{(\chi,(r,\mathrm{id}_{x_2}))}} & \gamma \act (\star_2,x_2) \ar[dr]^{\Phi_{(\gamma,\chi)}((\star_2,x_2))} & \\
		& \gamma' \act (\star_1,x_1) \ar[rr]_{\gamma' \act (r,\mathrm{id}_{x_2})} & & \gamma' \act (\star_2,x_2)
	}$   
}
\end{equation}

\subsection{The naturality square at a node of outer naturality square}

Next at each node of outer naturality square as shown in Figure~\ref{fig:natural} we have inner naturality squares corresponding to inner 2-group actions. Thus the symmetry of $(\ast_1,\mathcal{X}_1 \rtimes_{\mathbf{F_{2a}}} \mathcal{G}_3)$ could be expressed using an induced (in-out) 2-group structure as shown in Equation~\ref{eq:inout2gact}.

\begin{equation}
\xymatrix{
	(\ast_1,X \rtimes_{\mathbf{F_{2a}}} \mathcal{G}_3) & & (\ast_1,X \rtimes_{\mathbf{F_{2a}}} \mathcal{G}_3) \ar@/^2pc/[ll]^{(\mathrm{id}_{\ast_1},\gamma')}="3"  \ar@/_2pc/[ll]_{(\mathrm{id}_{\ast_1},\gamma)}="2" \\
	\ar@{=>}"2"+<0ex,-2ex> ;"3"+<0ex,+2ex>^{(\mathrm{id}_{\ast_1},\chi)}
}
\label{eq:inout2gact}
\end{equation}

Corresponding to an inner natural square as shown in Equation~\ref{eq:inner} we get a natural square at the node corresponding to object $\ast_1$ of the outer groupoid $\mathcal{G}_2$ as shown in Equation~\ref{eq:inout}.

\begin{equation}\label{eq:inout}
\resizebox{\linewidth}{!}{
	$\xymatrix{
		(\ast_1,(\star_1,x_1)) \ar@{-->}[drrr] \ar[ddd]_{(\mathrm{id}_{\ast_1},(\gamma,(\star_1,x_1)))}="00" \ar@{=}[dr] \ar[rr]^{(\mathrm{id}_{\ast_1},(r,\mathrm{id}_{x_2}))}  & & (\ast_1,(\star_2,x_2)) \ar[ddd]^{(\mathrm{id}_{\ast_1},(\gamma,(\star_2,x_2)))}="10" \ar@{=}[dr] & \\
		& (\ast_1,(\star_1,x_1)) \ar[ddd]_(.4){(\gamma',(\star_1,x_1))}="01" \ar[rr]_(.4){(\mathrm{id}_{\ast_1},(r,\mathrm{id}_{x_2}))}  & & (\ast_1,(\star_2,x_2)) \ar[ddd]^{(\gamma',(\star_2,x_2))}="11"\\
		& & & \\
		(\ast_1,(\gamma \act (\star_1,x_1))) \ar@{-->}[drrr] \ar[dr]_{(\mathrm{id}_{\ast_1},(\Phi_{(\gamma,\chi)}((\star_1,x_1))))} \ar[rr]^(.6){(\mathrm{id}_{\ast_1},(\gamma \act (r,\mathrm{id}_{x_2})))} & \uurtwocell<\omit>{\omit *+[F]{(\ast_1,(\chi,(r,\mathrm{id}_{x_2})))}} & (\ast_1,(\gamma \act (\star_2,x_2))) \ar[dr]^{(\mathrm{id}_{\ast_1},(\Phi_{(\gamma,\chi)}((\star_2,x_2))))} & \\
		& (\ast_1,(\gamma' \act (\star_1,x_1))) \ar[rr]_{(\mathrm{id}_{\ast_1},(\gamma' \act (r,\mathrm{id}_{x_2})))} & & (\ast_1,(\gamma' \act (\star_2,x_2)))
	}$   
}
\end{equation}

\subsection{The naturality square corresponding to outer 2G action}

Finally the whole symmetry of $\mathcal{X} \rtimes_{\mathbf{F_{1}}} \mathcal{G}_2$ is expressed using an (global) 2-group (which truly becomes a 4-group relative to inner 2-groups) as shown in Equation~\ref{eq:outer2gact}. The 2-functor $\Phi$ defines a mapping on the single object given by $\Phi_o(\star) = (\mathcal{X} \rtimes_{\mathbf{F_{1}}} \mathcal{G}_2)$. The mapping of 1-morphism is denoted as $\Phi_o(\gamma_o)$ while that of the 2-morphism is denoted as $\Phi_o(\gamma_o,\chi_o)$ where $\chi_o: \gamma_o \rightarrow \gamma_o'$ is the 2-morphism of the outer (global) $\mathbf{2G}$.

\begin{equation}
\xymatrix{
	\mathcal{X} \rtimes_{\mathbf{F_{1}}} \mathcal{G}_2 & & \mathcal{X} \rtimes_{\mathbf{F_{1}}} \mathcal{G}_2 \ar@/^2pc/[ll]^{\Phi(\gamma'_o)=\gamma'_o \act () }="3"  \ar@/_2pc/[ll]_{\Phi(\gamma_o)=\gamma_o \act () }="2" \\
	\ar@{=>}"2"+<0ex,-2ex> ;"3"+<0ex,+2ex>^{\Phi_{(\gamma_o,\chi_o)}}
}
\end{equation}

The action of 1-morphisms $\gamma_o,\gamma'_o$ and 2-morphism $\chi_o$ between these on a given arrow $(r,\mathrm{id}_{x_2})$ in $\mathcal{X} \rtimes_{\mathbf{F_{1}}} \mathcal{G}_2$ generates a fundamental square as shown in Equation~\ref{eq:outer}.

\begin{equation}\label{eq:outer}
\resizebox{\linewidth}{!}{
	$\xymatrix{
		(\ast_1,(\star_1,x_1)) \ar@{-->}[drrr] \ar[ddd]_{(\gamma_o,(\ast_1,(\star_1,x_1)))}="00" \ar@{=}[dr] \ar[rr]^{(t,(r,\mathrm{id}_{x_4}))}  & & (\ast_2,(\star_5,x_5)) \ar[ddd]^{(\gamma_o,(\ast_2,(\star_5,x_5)))}="10" \ar@{=}[dr] & \\
		& (\ast_1,(\star_1,x_1)) \ar[ddd]_(.4){(\gamma'_o,(\ast_1,(\star_1,x_1)))}="01" \ar[rr]_(.4){(t,(r,\mathrm{id}_{x_4}))}  & & (\ast_2,(\star_5,x_5)) \ar[ddd]^{(\gamma'_o,(\ast_2,(\star_5,x_5)))}="11"\\
		& & & \\
		\gamma_o \act (\ast_1,(\star_1,x_1)) \ar@{-->}[drrr] \ar[dr]_{\Phi_{(\gamma_o,\chi_o)}((\star_1,x_1))} \ar[rr]^(.6){\gamma_o \act (t,(r,\mathrm{id}_{x_4}))} & \uurtwocell<\omit>{\omit *+[F]{(\chi_o,(t,(r,\mathrm{id}_{x_4})))}} & \gamma_o \act (\ast_2,(\star_5,x_5)) \ar[dr]^{\Phi_{(\gamma_o,\chi_o)}((\ast_2,(\star_5,x_5)))} & \\
		& \gamma'_o \act (\ast_1,(\star_1,x_1)) \ar[rr]_{\gamma'_o \act (t,(r,\mathrm{id}_{x_4}))} & & \gamma'_o \act (\ast_2,(\star_5,x_5))
	} $
}  
\end{equation}

Thus a simple example of finite sets with 2-levels of symmetry served to illustrate a nesting of 2-groups leading to a 4-group which captures its essential structure. We shall not work out any further details regarding the connection of such base hierarchy with the existing literature \cite{leinster}, \cite{baeztowards} in this paper. Understanding the complete ramification of such a connection from an applied perspective is an interesting research direction.

\section{Geometries as Base structured categories $\mathcal{X} \rtimes_{\mathbb{F}} \mathbf{C}$}
\label{sec:groupoid}
As studied comprehensively in \cite{sharpe}, Felix Klein unified many geometries utilizing the notion of a principal group. First we recall the classic definition of Klein geometry.

\begin{definition}\cite{sharpe}
	\label{kgeometry}
	A Klein geometry is a pair $(G,H)$ where $G$ is a Lie group and ${H} \subset {G}$ a closed subgroup such that ${G}/{H}$ is connected. ${G}$ is called the principal group of the geometry. The coset space ${X} = {G}/{H}$ is the space of the geometry or Klein geometry.          
\end{definition}

\begin{proposition}
\label{kgb}
Every Klein Geometry is a base-structured category.	
\end{proposition}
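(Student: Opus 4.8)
The plan is to realize a Klein geometry $(G,H)$ as the concrete base structured category $\mathcal{X}\rtimes_{\mathbb{F}}\mathbf{H}$ of Definition~\ref{defn6}, with base $\mathbf{H}$ taken to be the Lie group $H$ regarded as a one-object category (single object $\star$, $\mathbf{H}(\star,\star)=H$, composition $=$ group multiplication), and with the inducing functor $F:\mathbf{H}\to\mathbf{Man}^{\infty}$ given by $F(\star)=G/H$ and $F(h)=L_{h}$, where $L_{h}\colon gH\mapsto hgH$ is left translation by $h$. Post-composing with the forgetful functor $U:\mathbf{Man}^{\infty}\to\mathbf{Set}$ (which is faithful, so $(\mathbf{Man}^{\infty},U)$ is concrete over $\mathbf{Set}$) yields $\mathbb{F}=U\circ F$, and the completion $\mathcal{X}\rtimes_{\mathbb{F}}\mathbf{H}$ is, by Definition~\ref{defn6}, a base structured category whose object of objects is precisely $\mathcal{X}=\mathbb{F}(\star)=G/H$, the space of the geometry in Definition~\ref{kgeometry}.

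The steps, in order. First I would invoke the quotient manifold theorem: since $H\subset G$ is a closed subgroup, $G/H$ carries a canonical smooth structure for which $G\to G/H$ is a submersion and the left $G$-action on $G/H$ is smooth; hence for each $h\in H$ the map $L_{h}$ is a diffeomorphism of $G/H$ with inverse $L_{h^{-1}}$, so $F$ genuinely lands in $\mathbf{Man}^{\infty}$, and $F(\mathrm{id})=\mathrm{id}_{G/H}$, $F(h_{2}h_{1})=L_{h_{2}h_{1}}=L_{h_{2}}\circ L_{h_{1}}=F(h_{2})\circ F(h_{1})$ make $F$ a functor. Second, Definition~\ref{defn6} then applies verbatim to $\mathbb{F}=U\circ F$, producing the category with objects $(\star,gH)$, morphisms $(h,hgH)\colon(\star,gH)\to(\star,hgH)$, identity $(\mathrm{id}_{\star},gH)$, and composition given by multiplication in $H$ in the first coordinate; no verification beyond citing that definition is needed. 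Third, I would record the identification with classical data: by the Proposition proved above that a transformation groupoid $X\wquot G$ is the base structured category $\int_{\mathbf{G}}\bar{F}$ (a relabelling of objects), the category $\mathcal{X}\rtimes_{\mathbb{F}}\mathbf{H}$ is precisely the transformation groupoid $(G/H)\wquot H$ of the $H$-action on $G/H$; in this picture $\mathbf{H}$ is the base recording the principal isotropy, $\mathcal{X}=G/H$ is the geometry space, the cartesian lift of $h\in H$ over $(\star,gH)$ is the action $gH\mapsto hgH$, and the connectedness clause in Definition~\ref{kgeometry} becomes the statement that the object $\mathcal{X}\in\mathbf{Man}^{\infty}$ of the base structured category is a connected manifold.

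The only genuinely non-formal ingredient, and hence the main point to get right, is the quotient manifold theorem, which supplies $G/H\in\mathrm{Ob}(\mathbf{Man}^{\infty})$ and the smoothness (hence diffeomorphy) of the $L_{h}$; once that is in hand, Definition~\ref{defn6} discharges all remaining verifications mechanically. A secondary point of care is the precise reading of ``is'' in the statement: what the construction exhibits is Klein's defining data --- the homogeneous space $G/H$, the principal isotropy $H$, and the action --- packaged as a base structured category, and one should stress that retaining the intermediate $\mathbf{Man}^{\infty}$ in $\mathbb{F}:\mathbf{H}\xrightarrow{F}\mathbf{Man}^{\infty}\xrightarrow{U}\mathbf{Set}$ is exactly what keeps the smooth structure of the geometry visible. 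If instead one wants the principal group $G$ itself to appear as the object of objects, the same argument runs with the inducing functor $F(\star)=G$, $F(h)=R_{h}$ (right translation of $G$), giving a base structured category with $\mathcal{X}=U(G)$ and orbit/component space $G/H$; I would add this as a remark connecting to the ``$\mathcal{G}=\mathcal{X}\rtimes\mathcal{B}$, $\mathcal{X}=\mathcal{G}/\mathcal{B}$'' phrasing used later for groupoid geometries.
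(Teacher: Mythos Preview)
Your proposal is correct and follows essentially the same route as the paper: take $\mathbf{H}$ as the one-object base, define $\mathbb{F}:\mathbf{H}\xrightarrow{F}\mathbf{Man}^{\infty}\xrightarrow{U}\mathbf{Set}$ with $F(\star)=G/H$, and form the Grothendieck completion $\mathcal{X}\rtimes_{\mathbb{F}}\mathbf{H}$. Your version is in fact more careful than the paper's --- you specify $F$ on morphisms, invoke the quotient manifold theorem to place $G/H$ in $\mathbf{Man}^{\infty}$, and verify functoriality --- and your identification of the resulting category with the transformation groupoid $(G/H)\wquot H$ is more precise than the paper's passing reference to $X\wquot G$, since the morphisms of $\mathcal{X}\rtimes_{\mathbb{F}}\mathbf{H}$ are indexed by $H$, not by all of $G$.
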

\begin{proof}
Consider Klein geometry which is a pair $(G,H)$ as in the definition~\ref{kgeometry}. To show that it is a base-structured category first consider the closed lie subgroup ${H} \subset {G}$. Since every group could be considered as a category with one object in which every morphism is an isomorphism; the lie group $H$ is viewed as a category $\mathbf{H}$ with a single object $\star$ with $Hom(\star,\star)= H$ and the composition map on homomorphisms is the multiplication in $H$. Note that $H$ is equivalently a group object in the category $\mathbf{Man}^{\infty}$ of smooth manifolds and therefore its homset $Hom(\star,\star)$ is an object of $\mathbf{Man}^{\infty}$ with this added structure. Now we can define a functor ${\mathbb{F}}: \mathbf{H} \xrightarrow{{F}} \mathbf{Man}^{\infty} \xrightarrow{U} \mathbf{Set}$. The Grothendieck completion of this functor gives the base structured category $\mathbf{G} = \mathcal{X} \rtimes_{\mathbb{F}} \mathbf{H}$. The image of single object $\star$ under functor $F$  is the smooth connected manifold $X = {{F}}(\star) $ which is same as the discrete category ${\mathbb{F}}(\star)$ which is the space of the geometry (which we shall call a global space to emphasize the crucial fact that it is the image of single object in the base category). The category $\mathbf{G}$ is simply isomorphic to the transformation groupoid corresponding to the action of lie group $G$ on the smooth manifold $X$ or precisely $X // G$ as defined in~\ref{def:transfngroupoid}. Hence Klein geometry $(G,H)$ is essentially a base structured category $\mathbf{G} = \mathcal{X} \rtimes_{\mathbb{F}} \mathbf{H}$.
\end{proof}

In a spirit of generalization, following the original motivation of Charles Ehrsemann \cite{charles}, \cite{andree} which is reviewed in \cite{pradines} where the term groupoid geometry is first coined as far as we are aware, we give a simple definition of \textbf{groupoid geometry} using the perspective offered by the base structured category $\mathcal{X} \rtimes_{\mathbb{F}} \mathbf{C}$. The Proposition~\ref{kgb} motivates one naturally to consider a lie groupoid replacing the classic lie group $H$. Thus we have a new base lie groupoid structured geometry generalizing the classic Klein geometry.

\begin{definition}
	\label{geometry}
	A groupoid (or base groupoid structured) geometry is a pair $(\mathcal{G},\mathcal{B})$ where $\mathcal{G}$ is a Lie groupoid and $\mathcal{B} \subset \mathcal{G}$ a closed subgroupoid such that $\mathcal{G}/\mathcal{B}$ is globally arbitrarily connected (locally connected). $\mathcal{G}$ will be called the principal groupoid of the geometry. The multi-object coproduct space $\mathcal{X} = \mathcal{G}/\mathcal{B}$ is the usual space of the geometry or (similar to Klein geometry) `groupoid geometry' itself.          
\end{definition}

The motivation comes from generalizing the geometry of the function or signal space (that corresponds to generative base structured signal spaces) from Euclidean geometry (that corresponds to Hilbert spaces)    

With this definition, it should even be possible to conceive of a geometry which contains inner hierarchy of geometries using the hierarchy of symmetries or base categories as developed in the earlier Section~\ref{h}. We now note down a quick comparison of this geometry with the existing notion of Klein geometry in Table~\ref{table}. This serves to intuitively illustrate the more general properties of this geometry with a multi-origin arbitrarily connected spaces, all attributed to the concept of multi-object groupoid action.

\begin{center}
	\begin{table}[ht]
		\centering
		\begin{tabular}{|c|c|}\hline
			\textbf{Klein Geometry} & \textbf{Groupoid Geometry} \\ \hline
			Lie Groups $(G,H)$ & Lie Groupoids $(\mathcal{G},\mathcal{B})$ \\ \hline
			Principal group $G = X \rtimes H$ & Principal groupoid $\mathcal{G} = \mathcal{X} \rtimes \mathcal{B}$ \\ \hline
			Homogeneous coset space $X = G/H$  & Coproduct Space $\mathcal{X} = X_1 \amalg X_2 \amalg ... = \mathcal{G}/\mathcal{B}$ \\ \hline
			Single basepoint(origin) $x_0 \in X$ & Multi-basepoints (origins) $x_1,x_2,.. \in \mathcal{X}$ \\ \hline
			$H$ is stabilizer subgroup of $x_0$ in $G$  & Local groups in $\mathcal{B}$ stabilize $x_1,x_2,...$ in $\mathcal{G}$ \\ \hline
		\end{tabular}
		\caption{From Klein Geometries \cite{pradines} to Groupoid Geometries}
		\label{table}
	\end{table}
\end{center}

\begin{proposition}
	\label{ggb}
	A groupoid geometry is a base-structured category $\mathbf{G} = \mathcal{X} \rtimes_{\mathbb{F}} \mathbf{B}$.	
\end{proposition}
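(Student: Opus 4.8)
The plan is to mirror the proof of Proposition~\ref{kgb} almost verbatim, replacing the single-object category $\mathbf{H}$ by a multi-object groupoid $\mathbf{B}$ and tracking where ``single object'' / ``connected'' assumptions were used. First I would observe that the Lie groupoid $\mathcal{G}$ and its closed Lie subgroupoid $\mathcal{B}$ can both be regarded as categories internal to $\mathbf{Man}^{\infty}$; in particular $\mathcal{B}$ is an honest small category $\mathbf{B}$ whose objects are the objects of the underlying object manifold $\mathcal{B}^{(0)}$ and whose homsets carry the structure of objects of $\mathbf{Man}^{\infty}$ (the composition being smooth). Unlike the Klein case, $\mathbf{B}$ now has possibly many objects $\star_1,\star_2,\dots$, one for each point of $\mathcal{B}^{(0)}$ (or, more invariantly, one for each connected component / chosen basepoint), which is exactly the ``multi-object base'' philosophy of Sections~\ref{bscgroup}--\ref{bscgroupoid}.

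Next I would exhibit the functor whose Grothendieck completion yields $\mathbf{G}$. Define ${F}:\mathbf{B}\to\mathbf{Man}^{\infty}$ by sending each object $\star_i$ to the corresponding piece $X_i$ of the coproduct space $\mathcal{X}=\mathcal{G}/\mathcal{B}$ (the orbit through $\star_i$), and each morphism of $\mathbf{B}$ to the induced smooth transport map between these local spaces; composing with the forgetful $U:\mathbf{Man}^{\infty}\to\mathbf{Set}$ gives ${\mathbb{F}} = U\circ{F}$. Then by Definition~\ref{defn6} the base structured category $\mathcal{X}\rtimes_{\mathbb{F}}\mathbf{B}$ has objects the pairs $(\star_i,x)$ with $x\in X_i$ and morphisms $(f,y):(\star_i,x)\to(\star_j,y)$ with $f:\star_i\to\star_j$ in $\mathbf{B}$ and $y={\mathbb{F}}f(x)$. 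I would then identify this completion with the transformation groupoid $\mathcal{X}\wquot\mathcal{G}$ exactly as in Proposition~\ref{kgb} (using the Proposition at the start of Section~\ref{sec:basesymm} that a transformation groupoid \emph{is} a base structured category $\int_{\mathbf{G}}\bar F$, dualized to the left-action form), and finally appeal to the classical fact that the action groupoid of $\mathcal{G}$ on $\mathcal{G}/\mathcal{B}$ recovers $\mathcal{G}$ itself, i.e. $\mathcal{G}=\mathcal{X}\rtimes_{\mathbb{F}}\mathbf{B}$ — which is precisely the principality condition $\mathcal{G}=\mathcal{X}\rtimes\mathcal{B}$ tabulated in Table~\ref{table}.

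I would also check the two bookkeeping points where the groupoid case genuinely differs from the group case: (i) the surjection $r_X:\mathcal{X}\to\mathbf{B}^{(0)}$ required by a category action (Definition~\ref{def:cat_action2}) is just the anchor/quotient map $\mathcal{G}/\mathcal{B}\to\mathcal{B}^{(0)}$, and surjectivity is exactly what makes every $X_i$ nonempty; (ii) ``$\mathcal{G}/\mathcal{B}$ connected'' in the Klein definition is weakened in Definition~\ref{geometry} to ``globally arbitrarily connected (locally connected),'' which matches the fact that $\mathbf{B}$ need not be connected, so $\mathcal{X}=X_1\amalg X_2\amalg\cdots$ is genuinely a coproduct and the fibers $\mathbf{E}_{\star_i}$ are the local homogeneous pieces, as in Figure~\ref{fig:symm2}. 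None of this requires new machinery; it is the same argument as Proposition~\ref{kgb} with the connectedness hypothesis dropped.

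The main obstacle is not conceptual but a matter of making the smooth structure precise: one must be careful that $\mathcal{G}/\mathcal{B}$ is actually a (possibly non-Hausdorff, or even just a diffeological/manifold-with-corners) object on which $\mathcal{G}$ acts smoothly, so that ${F}$ genuinely lands in $\mathbf{Man}^{\infty}$ rather than merely in $\mathbf{Set}$ — i.e. that $\mathcal{B}$ being \emph{closed} in $\mathcal{G}$ suffices, in the groupoid setting, to give the quotient a manifold structure in each fiber. I expect the cleanest route is to invoke the standard theory of Lie groupoid actions (the quotient of a free proper action, here read componentwise over $\mathcal{B}^{(0)}$) rather than to prove a quotient-manifold theorem from scratch; once that is granted, the identification $\mathbf{G}=\mathcal{X}\rtimes_{\mathbb{F}}\mathbf{B}$ is immediate from Definition~\ref{defn6} and the transformation-groupoid proposition already established.
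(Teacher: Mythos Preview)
Your proposal is correct and follows essentially the same route as the paper: view $\mathcal{B}$ as a multi-object category $\mathbf{B}$, define ${\mathbb{F}}:\mathbf{B}\xrightarrow{F}\mathbf{Man}^{\infty}\xrightarrow{U}\mathbf{Set}$ sending each $\star_i$ to its local space $X_i$, and take the Grothendieck completion to obtain $\mathbf{G}=\mathcal{X}\rtimes_{\mathbb{F}}\mathbf{B}$. The paper's own argument is in fact terser than yours---it does not spell out the transformation-groupoid identification, the anchor-map check, or the smooth-quotient caveat---so your additional bookkeeping (especially the concern about whether $\mathcal{G}/\mathcal{B}$ genuinely lands in $\mathbf{Man}^{\infty}$) is extra care rather than a deviation in strategy.
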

\begin{proof}
	Consider groupoid geometry which is a pair $(\mathcal{G},\mathcal{B})$ as in the definition~\ref{geometry}. To show that it is a base-structured category first consider the closed lie subgroupoid $\mathcal{B} \subset \mathcal{G}$. Since every groupouid could be considered as a category with multiple objects in which every morphism is an isomorphism; the lie groupoid $\mathcal{B}$ is viewed as a category $\mathbf{B}$ with many objects $\star_1,\star_2,..$. Here $\mathcal{B}$ is equivalently a groupoid object in the category $\mathbf{Man}^{\infty}$ of smooth manifolds. Now we can define a functor ${\mathbb{F}}: \mathbf{B} \xrightarrow{{F}} \mathbf{Man}^{\infty} \xrightarrow{U} \mathbf{Set}$. The Grothendieck completion of this functor gives the base structured category $\mathbf{G} = \mathcal{X} \rtimes_{\mathbb{F}} \mathbf{B}$. The image of multiple objects $\star_1,\star_2,..$ under functor $F$ are multiple smooth connected manifolds $X_1 = {{F}}(\star_1), X_2 = {{F}}(\star_2),... $ which are same as the multiple discrete categories ${\mathbb{F}}(\star_1), {\mathbb{F}}(\star_2), ...$ which is collective space of the geometry or more precisely $X = \amalg_{\star \in Ob(\mathbf{B})} F(\star) = X_1 \amalg X_2 \amalg ...$ (which we shall call a collection of local spaces to emphasize the crucial fact that it is the image of multiple objects in the base category). Hence groupoid geometry $(\mathcal{G},\mathcal{B})$ is essentially a base structured category $\mathbf{G} = \mathcal{X} \rtimes_{\mathbb{F}} \mathbf{B}$ where $\mathcal{G},\mathcal{B}$ are set-theoretic notations for the small categories $\mathbf{G},\mathbf{B}$. 
\end{proof}

With limited exposure to the history of development category theory, it appears to us that the concept of groupoid geometry was first conceived by Charles Ehresmann \cite{charles} and explicitly proposed in \cite{pradines} by generalizing smooth (locally trivial) principal bundle (in Ehresmann's sense) to non-locally trivial bundles. Referring \cite{andree}, Charles Ehresmann who showed the equivalence between various notions such as classic presheaves ($\mathbf{Set}$ valued functors), discrete fibrations and (mathematical) species of structures. In this sequel we try to generalize this to $\mathbf{D}$ valued functor $F$ by using a suitable $\mathbf{F}$ or $\mathbb{F}$ for objects general than sets in connection with our applied work which utilizes Hilbert spaces as objects rather than just sets. Again referring \cite{andree}, Ehresmann reformulated the fundamentals of differential geometry specifically using concepts of fibred spaces and foliated manifolds and some sort of a species of local structures and associated a category (groupoid) to it. Starting from \cite{pradines}, we have tried to conceptualize the geometry by interpreting the work of \cite{charles} from the perspective of category action introduced in~\cite{salilp1} of this sequel. In doing so we have proposed a simple definition for a geometry as a base structured category which arises from a general category action. As far as we are aware (at the time of writing this paper), within the literature there is no such definition~\ref{geometry} of a groupoid geometry. An interested reader well-versed on the subject of differential geometry along with considerable exposure to the category $\mathbf{Diff}$; can verify and completely develop this concept of base groupoid structured geometry (with a hierarchy of geometries using the hierarchy of bases we developed here giving rise to geometries within geometries). The details of connections with existing notion of non-commutative geometries \cite{connes} will be worked out in the future. Finally the reader must note that in \cite{Leyton01} the author argues against the Klein geometry using wreath group as implied base in situations involving transfer. It seems appropriate to us that the groupoid structured geometry (with hierarchy of groupoid bases) as defined here models the original conception of a generative geometry \cite{Leyton01} using hierarchy of multi-object groupoids than single-object groups in a functorial way.

\section{Multi-object species of structures, Multi-object signal spaces: Heuristic Discussion}
\label{sec:pointedness}

In this section referring \cite{brown87} and \cite{joyalspecies} we motivate the concept of multi-object species of algebraic structures which have some sort of natural collection of many objects and possibly might be related to each other. As mentioned in \cite{brown87} this term seems to have originated during Bourbaki seminars. We understood such a species of structures by characterizing them as having a base object containing multiple objects or equivalently as a functor $F: \mathbf{C} \rightarrow \mathbf{D}$ from a base category with multiple objects to a general category generalizing the spirit and definition of species of structures as endofunctors in \cite{joyalspecies}. For this first we revisit what can be loosely termed as a sort of object-arrow duality which is implicitly present in this sequel using the notion of base structured categories that characterize a functor or in other words object (category) that essentially captures the structure of arrow (functor). Recall that $(F,\mathbf{C},\mathbf{D})$ characterizes a functor as a structure preserving morphism and captures the essential structure of $F$. As an illustration consider the special example of an identity functor $\mathrm{id}_\mathbf{C}$. This functor can be treated as a category $\mathbf{C}$ which is just reflecting the fact that every identity arrow of an object can be taken to be the object itself. However the base structured category $(\mathrm{id}_\mathbf{C},\mathbf{C},\mathbf{C})$ also serves to capture the structure of $\mathrm{id}_\mathbf{C}$. Indeed by construction this is a category which consists of,
\begin{itemize}
	\item \textbf{objects}: $(X,X),(Y,Y), ...$ denoted by $\mbox{Ob}(\mathbf{G}_{\mathrm{id}_\mathbf{C}})$
	
	\item \textbf{morphisms}: collection $\mathbf{G}_{\mathrm{id}_\mathbf{C}}((X,X),(Y,Y))=\{(f,f):(X,X)\rightarrow (Y,Y)\}$
	
	\item \textbf{identity}: for each $(X,X)$, the morphism $\mathbf{1}_{(X,X)} = (\mathbf{1}_X,\mathbf{1}_{X})$
	
	\item \textbf{composition}: if $(g,f)\mapsto g\circ f$ in $\mathbf{C}$ then $((g,g),(f,f))\mapsto (g,g)\cdot (f,f)=(g\circ f,g\circ f)$
	
	\item \textbf{unit laws}: for $(f,f)$ we have $\mathbf{1}_{(Y,Y)}\cdot (f,f)=(f,f)=(f,f)\cdot \mathbf{1}_{(X,X)}$
	
	\item \textbf{associativity}:  $(h,h)\cdot ((g,g)\cdot (f,f))=((h,h)\cdot (g,g))\cdot (f,f)=(h,h)\cdot (g,g)\cdot (f,f) $
\end{itemize}
This is isomorphic to the category $\mathbf{C}$ itself and also serves to characterize the functor $\mathrm{id}_\mathbf{C}$. This implies that if $F: \mathbf{C} \rightarrow \mathbf{D}$ is taken to be a species of structures then it is equivalently characterized by the corresponding base structured category $(F,\mathbf{C},\mathbf{D})$ with multiple objects. 

Now we illustrate the notion of multi-object algebraic structures from a slightly different perspective of internal categories. First we recall the notion of algebraic structures within an ambient category or objects internal to some ambient category from \cite{CWM} and formulate a comparative difference taking the example of a group and a groupoid internal to some fixed ambient category.

A Groupoid in a category is a tuple consisting of,
\begin{enumerate}
	\item $G^{(0)}$ Object of Objects (also called Base Object $B$), 
	\item $G^{(1)}$ Object of arrows, 
	\item $d_0$ Domain map, 
	\item $d_1$ Codomain map, 
	\item $e$ Identity (section), 
	\item $m$ multiplication, 
	\item $i$ Inverse map. 
\end{enumerate}
This 7-tuple is simply reduced to 4-tuple in the special case of a group as denoted in Equation~\ref{diff1}.

\begin{equation}
\label{diff1}
\mbox{Groupoid:} 
\quad
G= (G^{(0)},G^{(1)},d_0,d_1,e,m,i)  ;
\mbox{Group:} 
\qquad 
G=(G^{(1)},e,m,i)
\end{equation}

The base object $B$ of a groupoid, is a general object of the ambient category which introduces the concept of multiple objects or object of objects or non-pointedness and arbitrary connectedness (since these multiple objects will have arbitrary interconnecting arrows). Precisely when this base object becomes a terminal object of the ambient category, which means there are no sub objects then we recover the special case of pointed and connected groupoid which is a group.

\begin{equation}
\xymatrix{
	G^{(1)} \ar@/^1pc/[r]^{d_0} \ar@/_1pc/[r]_{d_1} & B
}
\qquad
\xymatrix{
	G^{(1)} \ar[r]^-{d_0 = d_1}_-{!} & G^{(0)}=T=\star=B
}
\end{equation}
The identity map produces a section in the case of a groupoid which naturally reduces to an element in case of a group as below:
\begin{equation}
\xymatrix{
	B \ar[r]^{e} & G^{(1)}
}
\qquad
\xymatrix{
	T \ar[r]^{e} & G^{(1)}
}
\end{equation}
In case of groupoid, the object of composable pair of arrows is precisely the fibered product on the base which reduces to a general product in the special case of the group implying all arrows can be composed.
\begin{equation}
\xymatrix{
	G^{(2)}=G^{(1)} \times_{B} G^{(1)} \ar[r]^-{p_1} \ar[d]_{p_0} & G^{(1)} \ar[d]^{d_0} \\
	G^{(1)} \ar[r]_{d_1} & B
}
\qquad
\xymatrix{
	G^{(2)}=G^{(1)} \times G^{(1)} \ar[r]^{p_1} \ar[d]_{p_0} & G^{(1)} \ar[d]^{!} \\
	G^{(1)} \ar[r]_{!} & T
}
\end{equation}
The multiplication or the composition map is given below consists of 2 squares which are reduced to triviality in case of a group.
\begin{equation}
\xymatrix{
	G^{(1)} \ar[d]_{d_0} & G^{(2)} \ar[l]^{p_0} \ar[d]^{m} \ar[r]^{p_1} & G^{(1)} \ar[d]^{d_1} \\
	B & G^{(1)} \ar[l]_{d_0} \ar[r]_{d_1} & B
}
\quad
\xymatrix{
	G^{(1)} \ar[d]_{!} & G^{(2)} \ar[l]^{p_0} \ar[d]^{m} \ar[r]^{p_1} & G^{(1)} \ar[d]^{!} \\
	T & G^{(1)} \ar[l]_{!} \ar[r]_{!} & T
}
\end{equation}
The unit laws for left and right identity in case of a group reduce to usual laws of a group.
\begin{equation}
\xymatrix{
	B \times_{B} G^{(1)} \ar[rd]_{d_0} \ar[r]^{e \times id} & G^{(2)} \ar[d]^{m} & G^{(1)} \times_{B} B \ar[l]^{id \times e} \ar[ld]^{d_1} \\
	& G^{(1)} & 
}
\quad
\xymatrix{
	B \times_{B} G^{(1)} \ar[rd]_{d_0} \ar[r]^{e \times id} & G^{(2)} \ar[d]^{m} & G^{(1)} \times_{B} B \ar[l]^{id \times e} \ar[ld]^{d_1} \\
	& G^{(1)} & 
}
\end{equation}
The composable triple of arrows for a groupoid is again 2 fibered products in serial which reduce to usual products for a group.
\begin{equation}
\xymatrix{
	G^{(3)}=G^{(1)} \times_{B} G^{(1)} \times_{B} G^{(1)} \ar[r] \ar[d] & G^{(1)} \ar[d]^{d_0} \\
	G^{(2)} \ar[r]_{d_1 \circ p_1} & B
}
\qquad
\xymatrix{
	G^{(3)}=G^{(1)} \times G^{(1)} \times G^{(1)} \ar[r] \ar[d] & G^{(1)} \ar[d]^{!} \\
	G^{(2)} \ar[r]_{!} & T
}
\end{equation}
The usual associativity axiom for these is
\begin{equation}
\xymatrix{
	G^{(3)} \ar[r]^{id \times m} \ar[d]_{m \times id} & G^{(2)} \ar[d]^{m} \\
	G^{(2)} \ar[r]_{m} & G^{(1)}
}
\qquad
\xymatrix{
	G^{(3)} \ar[r]^{id \times m} \ar[d]_{m \times id} & G^{(2)} \ar[d]^{m} \\
	G^{(2)} \ar[r]_{m} & G^{(1)}
}
\end{equation}

Hence on introspection we deduce a crucial fact that it is the notion of a non-terminal base object which gives rise to a general concept of multi-object species of structures. Some other examples of such species are presented in Table~\ref{table2}.

\begin{table}[ht]
	\centering
	\resizebox{0.85\linewidth}{!}{
		\begin{tabular}{|c|c|}\hline
			\textbf{Pointed,Connected,Single Object} & \textbf{Non-Pointed,Arbitrarily Connected, Multi-object} \\ \hline
			Monoid & Category \\ \hline
			Group & Groupoid \\ \hline
			Monoidal Category & 2-category \\ \hline
			2-group & 2-groupoid \\ \hline
			Wreath Group (Leyton's Transfer) & Semi-direct Groupoid (Ehrsemann Transport) \\ \hline
			Classical Klein Group Geometries  & Groupoid Geometries \\ \hline
			Vector Space & Vector Bundle \\ \hline
		\end{tabular}
	}
	\caption{single-object against multi-object species of structures}
	\label{table2}
\end{table}

Thus equivalent to multi-object base categories, multi-object species of structures could also be characterized as internal categories in some ambient category. For such multi-object species of structures, the inherent natural structure is best modeled using a base which is a category consisting of many objects. The object upon which this base category $\mathbf{C}$ acts is $\mathcal{X} = \amalg_{X \in Ob(\mathbf{C})} F(X)$ which is the object of objects (or a coproduct of objects) in $\mathbf{D}$ and naturally corresponds to a base object of a given multi-object species. Depending on the kind of action, one can expect to capture the whole or partial structure within the base category while the functor(arrow) represents the structure relative to a base. Note that given a multi-object species of structure it is possible to represent this structure relative to a pointed or single-object base category. As an example we have already seen earlier in Section~\ref{sec:wreath} that a wreath group captures the structure of a set consisting of multiple objects or subsets as a single object set. However such a representation compared to multi-object base category such as semidirect groupoid is highly suboptimal and not naturally matched to its true generative structure (in sense of Leyton's generative theory \cite{Leyton01}) and takes more amount of memory storage. This is studied in detail in the context of signal spaces in \cite{salilp3} and \cite{salilp4}. Classically the signal processing community has been utilizing single-object species such as vector spaces and group theoretic representations such as wavelets, gabor, shearlets etc formulated through theory of group frames see \cite{fft} and references therein until recently in \cite{guillemard} where multi-object structure (groupoid $C_{\ast}$ algebra) was explored for sparse time-frequency representations. This has motivated us to study the deeper implications of using multi-object species such as base structured categories with multiple objects in the base category as signal spaces and its impact on fundamental notions of redundancy, linear independence, true dimensionality and general information analysis which we formulate in \cite{salilp3}.

Thus most general base perspective is offered by a category which covers all the cases both single as well multi-object. This is further strengthened and generalized using a hierarchy of bases leading to multi-resolution kind of analysis on algebraic structures and corresponding n-category species of structures. From the Tables~\ref{table3},~\ref{table4},~\ref{table5} the reader can intuitively infer that relative to the choice of $B$, the structure of $X$ is resolved. This gives rise to a crucial question of how does one make appropriate choice of $B$ to be able to get to the natural structure within $X$ which has not been answered yet in this sequel. This will be dealt in \cite{salilp4} of the sequel.

\begin{center}
	\begin{table}[ht]
		\centering
		\resizebox{\linewidth}{!}{	
			\begin{tabular}{|c|c|}\hline
				\textbf{Base Category $\mathbf{C}$} & \textbf{Induced structure on $X = F(\mathbf{C})$ in $\mathbf{D}$}\\ \hline
				Singleton (Single Object, Trivial arrow) & X modeled as object in $\mathbf{D}$  \\ \hline
				Set $I$ (Multi-Object, Trivial arrows) & X modeled as coproduct object \\ \hline
				Monoid $\mathbf{M}$ (Single Object, Multi-arrow) & X modeled as monoid object \\ \hline
				Category $\mathbf{C}$ (Multi-Object, Multi-arrow) & X modeled as category object\\ \hline
			\end{tabular}
		}
		\caption{Structure induced on $X$ in a general category $\mathbf{D}$ relative to different bases.}
		\label{table3}
	\end{table}
\end{center}  

\begin{center}
	\begin{table}[ht]
		\centering
		\resizebox{\linewidth}{!}{	
			\begin{tabular}{|c|c|}\hline
				\textbf{Base Category $\mathbf{B}$} & \textbf{Induced structure on $X = F(\mathbf{B})$ in $\mathbf{Set}$}\\ \hline
				Singleton (Single Object, Trivial arrow) & X modeled as single whole set \\ \hline
				Set $I$ (Multi-Object, Trivial arrows) & X a set partitioned into subsets \\ \hline
				Monoid $\mathbf{M}$ (Single Object, Multi-arrow) & X as endoset on some set \\ \hline
				Category $\mathbf{C}$ (Multi-Object, Multi-arrow) & X as small category \\ \hline
			\end{tabular}
		}
		\caption{Structure induced on $X$ in $\mathbf{Set}$ relative to different bases.}
		\label{table4}
	\end{table}
\end{center}  

\begin{center}
	\begin{table}[ht]
		\centering
		\resizebox{\linewidth}{!}{	
			\begin{tabular}{|c|c|}\hline
				\textbf{Base Category as Structure} & \textbf{Induced structure on $X = F(\mathbf{B})$ in $\mathbf{Cat}$}\\ \hline
				Terminal (Single Object, Trivial arrow) & X as small 1-category\\ \hline
				Discrete (Multi-Object, Trivial arrows) & X as coproduct category \\ \hline
				Monoid $\mathbf{M}$ (Single Object, Multi-arrow) & X as endofunctor category,(strict)Monoidal category \\ \hline
				Category $\mathbf{C}$ (Multi-Object, Multi-arrow) & X as 2-category \\ \hline
			\end{tabular}
		}
		\caption{Structure induced on $X$ in $\mathbf{Cat}$ relative to different bases.}
		\label{table5}
	\end{table}
\end{center} 

In summary, the heuristic conclusions which could be drawn based on the discussion in this section are as follows:
\begin{itemize}
	\item
	Group theory capturing symmetry structure in applications when seen through the lens of category theory is always relative to a trivial base or precisely in a given ambient category, the terminal object with no sub-objects. Groupoid theory again capturing symmetry could be considered as a generalization of group theory which is generalized to an arbitrary base. The base is precisely any general object containing multiple sub-objects and need not be terminal in a given ambient category.
	\item
	Groups and more generally single-object species of structures are the algebraic structures formed by collections of transformations of global nature. Breaking global transformations into local transformations leads naturally to multi-object species of structures. Intuitively it can be seen that since group theory (or theory of any single-object species of structure) looks at an object of interest from global or external point of view, to cover all local structural possibilities (essentially when whole object is naturally not rigid relative to parts of it), the theory needs to account for this from a global perspective. The single object perspective becomes even more restrictive when the base object consists of internal arrows among sub-objects having structures coming from some external category. To cover such a possibility one needs to a priori start with a reference frame which is in some sense fibered on other reference frame. This crucial dependence of frame of reference on natural object structure makes whole base structured categories more natural. Clearly multi-object base structured category perspective is optimal in all such applied situations where a complex object is naturally evolved from smaller prototype using the concept of Leyton transfer or equivalently Ehresmann transport since it accounts for arbitrary local possibilities quite naturally, coming from its non-pointedness and arbitrary connectedness.       
\end{itemize}

\section{Conclusion and Extension}
This paper expanded the base structured category $\mathcal{X} \rtimes_{\mathbf{F}} \mathbf{C}$ and $\mathcal{X} \rtimes_{\mathbb{F}} \mathbf{C}$, developing the representation perspective of a functor as a multi-object category action introduced in \cite{salilp1}. In summary:

\begin{itemize}
	\item First we looked at how Leyton's generative theory of shape could be more faithfully modeled using hierarchy of base structured categories using multi-object groupoid bases as compared to groups.
	
	\item Next using simple example of permutation action on a finite set at two levels (local and global) we showed that hierarchy of base structured categories is precisely the special case of the theory of composite Grothendieck fibrations.
	
	\item Further we utilized the notion of 2-group action on a category to propose a hierarchy of 2-groups capturing the symmetry of hierarchy of base structured categories naturally connecting with the higher category theory.
	
	\item Finally we proposed a simple Definition~\ref{geometry} of groupoid structured geometry and briefly commented on multi-object species of structures in anticipation of the impact of such structures in the applied context of multi-object generative signal spaces.
\end{itemize} 

The paper from a geometrical viewpoint illustrated the framework that we are developing beginning with \cite{salilp1}. In the next papers \cite{salilp3},\cite{salilp4} of this sequel we shall work out a definition of redundancy using the base structured categorical framework. This includes detailed discussions on compression and information analysis. We shall also discuss a few practical algorithms to learn or estimate the optimal base category for a given signal to be represented along with limitations and open issues of the proposed novel framework. 

\bibliography{p2.bib}
\bibliographystyle{acm}

\end{document}